\numberwithin{equation}{section}
\newcommand\R{{\mathbb R}}
\DeclareMathOperator{\supp}{supp}
\DeclareMathOperator{\spa}{span}
\DeclareMathOperator{\kernel}{Ker}
\DeclareMathOperator{\range}{Ran}
\DeclareMathOperator{\Dim}{Dim}
\DeclareMathOperator{\trace}{trace}
\newcommand\bof{{\bf f}}
\newcommand{\norm}[1]{\left\Vert#1\right\Vert}
\newcommand{\bR}{\mathbb R}
\newcommand{\bA}{{\bf A}}
\newcommand{\bS}{{\bf S}}
\newcommand{\bI}{{\bf Id}}
\newcommand{\rar}{\rightarrow}
\newcommand{\cH}{\mathcal{H}}
\theoremstyle{plain}
  \newtheorem{theorem}{Theorem}[section]
  \newtheorem{proposition}[theorem]{Proposition}
  \newtheorem{lemma}[theorem]{Lemma}
  \newtheorem{corollary}[theorem]{Corollary}
  \theoremstyle{remark}
\theoremstyle{definition}
  \newtheorem{definition}[theorem]{Definition}
  \newtheorem{example}[theorem]{Example}
\begin{document}
\include{psfig}
\title{Approximately Dual and Pseudo-Dual Probabilistic Frames}

\keywords{probabilistic frame; redundancy; dual probabilistic frame; approximately dual probabilistic frame; pseudo-dual probabilistic frame; optimal transport}
\subjclass[2020]{42C15}	

\author{Dongwei Chen}
\address{Department of Mathematics, Colorado State University, Fort Collins, CO, US, 80523}
\email{dongwei.chen@colostate.edu}

\author{Emily J. King}
\address{Department of Mathematics, Colorado State University, Fort Collins, CO, US, 80523}
\email{emily.king@colostate.edu}

\author{Clayton Shonkwiler}
\address{Department of Mathematics, Colorado State University, Fort Collins, CO, US, 80523}
\email{clayton.shonkwiler@colostate.edu}
 
\begin{abstract}
This paper studies properties of dual probabilistic frames---in particular in relation to redundancy---and introduces both approximately dual probabilistic frames and pseudo-dual probabilistic frames. We show that the canonical dual probabilistic frame is the only dual frame of pushforward type of a probabilistic frame with zero redundancy.  Furthermore, we show that probabilistic frames with finite redundancy are atomic and finite. Approximately dual probabilistic frames generalize duality, with pseudo-duality being a further generalization. We introduce these concepts and prove certain structural results. In particular, every probabilistic frame has a discrete finite frame as an approximate dual.
\end{abstract}

\maketitle
\section{Introduction}

A sequence of vectors $\{{\bf f}_i\}_{i \in I}$ for $I$ an at-most countable index set in a separable Hilbert space $\mathcal{H}$ is called a \textit{frame} for $\mathcal{H}$ if  there exist \emph{frame bounds} $0<A \leq B < \infty$ such that for any ${\bf f} \in \mathcal{H}$, 
\begin{equation}\label{eqn:framedef}
    A \Vert {\bf f}  \Vert^2 \leq\sum_{i \in I} \vert \left\langle {\bf f},{\bf f}_i \right\rangle \vert ^2  \leq B\Vert {\bf f} \Vert^2.
\end{equation} 
Furthermore, $\{{\bf f}_i\}_{i \in I}$ is called a \emph{tight frame} if $A$ and $B$ may be chosen so that $A = B$ and a \emph{Parseval frame} if $A = B=1$. 

Frames were first introduced by Duffin and Schaeffer to analyze perturbations of Fourier series~\cite{duffin1952class}, and have since been applied in many areas, such as the Kadison–Singer problem~\cite{casazza2013kadison}, time-frequency analysis~\cite{grochenig2001foundations},  wavelet analysis~\cite{daubechies1992ten},  coding theory~\cite{strohmer2003grassmannian}, sampling theory~\cite{eldar2003sampling}, 
image processing~\cite{kutyniok2012multiscale},  compressed sensing~\cite{naidu2020construction}, and the design of filter banks~\cite{fickus2013finite}.   
Frames have been studied using tools from many mathematical areas, including operator theory \cite{christensen2017operator}, combinatorics \cite{bownik2015combinatorial}, matroid theory \cite{bernstein2020algebraic}, and algebraic geometry \cite{cahill2013algebraic}. In this paper, we study (probabilistic) frames from the perspective of probability theory and optimal transport \cite{ehler2012random, ehler2013probabilistic, wickman2017duality, wickman2014optimal}.  
For readers interested in more background on frame theory, see \cite{christensen2016introduction}.

Frames are often used to analyze signals and corresponding \emph{dual frames} are used to reconstruct signals. A sequence $\{{\bf g}_i\}_{i \in I}$  in $\mathcal{H}$ is called a \textit{dual frame} of the frame $\{{\bf f}_i\}_{i \in I}$ if
\begin{equation}\label{eqn:dualexpansion}
   {\bf f} = \sum_{i \in I}  \left\langle {\bf f},{\bf g}_i \right\rangle  {\bf f}_i   = \sum_{i \in I}  \left\langle {\bf f},{\bf f}_i \right\rangle  {\bf g}_i, \enskip\text{for any ${\bf f} \in \mathcal{H}$}.
\end{equation}
In this case, $\{{\bf g}_i\}_{i \in I}$ and $\{{\bf f}_i\}_{i\in I}$ are said to be a \textit{dual pair}. If the frame is not a Riesz basis, then there are infinitely many dual frames and thus infinitely many frame expansions for any $\bof$. One may think of this richness of frame expansions as being a measure of redundancy.

A particular choice of dual frame for $\{{\bf f}_i\}_{i\in I}$ is the \emph{canonical dual frame} $\{{\bf S}^{-1}{\bf f}_i\}_{i \in I}$ where ${\bf S}$ is the \textit{frame operator}:
\begin{equation*}
    {\bf S}: \mathcal{H} \rightarrow \mathcal{H}, \ \ \ {\bf S}({\bf f}) = \sum_{i \in I}  \left\langle {\bf f},{\bf f}_i \right\rangle  {\bf f}_i, \ \text{for any} \ {\bf f} \in \mathcal{H}.
\end{equation*}
Indeed, ${\bf S}: \mathcal{H} \rightarrow \mathcal{H}$ is a bounded, positive, and invertible operator. If $\bS = A \ \bI$ for some $A>0$, then the canonical dual is simply a scaling of the original frame, allowing for easy reconstruction. Note that  $\{{\bf f}_i\}_{i \in I}$  is tight with bound $A>0$ if and only if $\bS = A \ \bI$ and Parseval if $A=1$. In particular, when $\mathcal{H}$ is finite dimensional with dimension $n>0$, one obtains finite sums in all of the definitions above, and can choose $I = \{1, \hdots, N\}$ where $N \geq n$.

Suppose $\{{\bf y}_i\}_{i=1}^N$ is a finite frame for the Euclidean space $\mathbb{R}^n$ where $N \geq n$. Letting $\mu_y := \frac{1}{N} \sum\limits_{i=1}^N   \delta_{{\bf y}_i}$ be the uniform distribution on $\{{\bf y}_i\}_{i=1}^N$,  the frame definition in~\eqref{eqn:framedef} is equivalent to 
\begin{equation*}
    \frac{A}{N} \Vert {\bf x}  \Vert^2 \leq\int_{\mathbb{R}^n} \vert \left\langle {\bf x},{\bf y} \right\rangle \vert ^2 d\mu_y({\bf y})  \leq \frac{B}{N} \Vert {\bf x}  \Vert^2, \ \text{for any} \ {\bf x} \in \mathbb{R}^n,
\end{equation*}
which allows finite frames to be interpreted as discrete probabilistic measures. 
More generally, a probability measure $\mu$ on $\mathbb{R}^n$ is called a \emph{probabilistic frame} for $\mathbb{R}^n$ if there exist $0<A \leq B < \infty$ such that for any ${\bf x} \in \mathbb{R}^n$, 
\begin{equation*}
    A \Vert {\bf x}  \Vert^2 \leq\int_{\mathbb{R}^n} \vert \left\langle {\bf x},{\bf y} \right\rangle \vert ^2 d\mu({\bf y})  \leq B \Vert {\bf x}  \Vert^2.
\end{equation*}
Furthermore, $\mu$ is called a \emph{tight probabilistic frame} if $A = B$ and \emph{Parseval} if $A = B=1$. 

Probabilistic frames have finite second moments, which puts optimal transport and the $2$-Wasserstein distance in the realm of tools that can be used to study probabilistic frames. This is of interest even if one is only interested in finite frames, since $2$-Wasserstein distance can be used to quantify the distance between any two probabilistic frames, including atomic probabilistic frames coming from finite frames of different cardinalities.

Our goal here is to study duals of probabilistic frames and their generalizations.

\Cref{preliminary} gives some mathematical preliminaries on probabilistic frames and optimal transport.

\Cref{redundancy} concerns dual probabilistic frames and the redundancy of probabilistic frames. The definition of redundancy is motivated by Riesz bases, which are precisely the frames that yield unique frame expansions and, equivalently, are precisely the frames that have a so-called synthesis operator with a trivial kernel. In, e.g, \cite{hemmat2001generalized, balan2003deficits,hosseini2013structure,jakobsen2016density, speckbacher2019frames} redundancy is defined for discrete and continuous frames to be the dimension of the kernel of the synthesis operator. Applying this definition to probabilistic frames, we show in \cref{thm:finredfinsupp} that if a probabilistic frame has finite redundancy, it must be not only atomic but also finitely supported.

Given a probabilistic frame $\mu$, a measure $\nu$ with finite second moment is a \emph{dual probabilistic frame} for $\mu$ if there exists a transport coupling $\gamma \in \Gamma(\mu, \nu)$ such that 
\begin{equation*}
   \int_{\mathbb{R}^n \times \mathbb{R}^n} {\bf x}{\bf y}^t d\gamma({\bf x}, {\bf y}) = {\bf Id},
\end{equation*} 
where ${\bf y}^t$ is the transpose of the vector ${\bf y}$. We show in \cref{zeroRedundancy} that if a probabilistic frame has zero redundancy, the canonical dual is the only dual probabilistic frame of pushforward type, generalizing a well-known result for frames in Hilbert spaces. 

Characterizing all the dual probabilistic frames for a given probabilistic frame is important for signal reconstruction. This remains an open question, made more challenging by the fact that the set of dual probabilistic frames is not closed~\cite{chen2024general}. Therefore, in \cref{approximateDual},  we introduce approximately dual probabilistic frames, which can still produce perfect signal reconstruction: a measure $\nu$ with finite second moment is an \textit{approximately dual probabilistic frame} for the probabilistic frame $\mu$ if there exists $\gamma \in \Gamma(\mu, \nu)$ such that 
\begin{equation*}
   \left \| \int_{\mathbb{R}^n \times \mathbb{R}^n} {\bf x}{\bf y}^t d\gamma({\bf x}, {\bf y}) - {\bf Id} \right \|<1.
\end{equation*} 
We show that approximate duals can still yield perfect reconstruction.  
We also characterize all the approximately dual probabilistic frames that are pushforwards of the given probabilistic frame. 

In \cref{peturbation}, we study the approximately dual frames of perturbed probabilistic frames. We claim that if a probability measure is close to some probabilistic dual pair in an appropriate sense, then this probability measure is an approximately dual probabilistic frame. As a consequence, we show in \cref{thm:discrete approx dual} that every probabilistic frame has a \emph{finite, discrete} approximately dual probabilistic frame.

Finally, in \cref{pseudoDual} we consider a more general notion of dual frames, known as \emph{pseudo-dual probabilistic frames}.

\section{Preliminaries}\label{preliminary}
In this section, we give some necessary background on probabilistic frames and optimal transport. Throughout the paper,  we use ${\bf A}$ to denote a real matrix and $A$ a real number. Similarly,  ${\bf x}$ is used to denote a vector in $\mathbb{R}^n$ while $x$ is a point in the real line. In particular, ${\bf 0}$ is used to denote the zero vector in $\mathbb{R}^n$ while $0$ is the number zero. We use ${\bf Id}$ for the $n \times n$ identity matrix or the identity map for $\mathcal{H}$, ${\bf 0}_{n \times n}$ the $n \times n$ zero matrix, and  ${\bf A}^t$ for the transpose of a matrix ${\bf A}$.

\subsection{Probabilistic Frames}
Let $\mathcal{P}(\mathbb{R}^n)$ be the set of Borel probability measures on $\mathbb{R}^n$ and $\mathcal{P}_2(\mathbb{R}^n) \subset \mathcal{P}(\mathbb{R}^n)$  the set of Borel probability measures with finite second moments:
\begin{equation*}
   \mathcal{P}_2(\mathbb{R}^n) = \left\{ \mu \in \mathcal{P}(\mathbb{R}^n): M_2(\mu) = \int_{\mathbb{R}^n} \Vert {\bf x} \Vert ^2 d\mu({\bf x}) < + \infty \right\}.
\end{equation*}
Let $B_r({\bf x})$ be the open ball centered at ${\bf x}$ with radius $r>0$.  The \textit{support} of $\mu \in  \mathcal{P}(\mathbb{R}^n)$ is defined by 
$$\supp(\mu) = \{ {\bf x} \in \mathbb{R}^n: \text{for any} \ r>0, \,\mu(B_r({\bf x}))>0  \}.$$
Let $\mu \in  \mathcal{P}(\mathbb{R}^n) $ and $f: \mathbb{R}^n \rightarrow \mathbb{R}^m$ be a Borel measurable map where $n$ may be different from $m$, then $f_{\#} \mu \in \mathcal{P}(\mathbb{R}^m)$ is called the \textit{pushforward} of $\mu$ by  the map $f$,  and is defined as
$$f_{\#} \mu (E) := (\mu \circ f^{-1}) (E) = \mu \big (f^{-1}(E) \big ) \ \text{for any Borel set}  \ E \subset \mathbb{R}^m.$$ If $f$ is linear and represented by a matrix ${\bf A}$ with respect to some basis, then ${\bf A}_{\#} \mu$ is used to denote $f_{\#} \mu$. In particular, we use $(\mathbf{Id}, f)$ to denote the map from $\mathbb{R}^n$ to $\mathbb{R}^n \times \mathbb{R}^m$ via $\mathbf{x} \mapsto (\mathbf{x}, f(\mathbf{x}))$. In this case, $(\mathbf{Id}, f)_\#\mu$ is a probability measure on $\mathbb{R}^n \times \mathbb{R}^m$ that is supported on the graph of $f$.

Inspired by the insight that finite frames induce discrete probability measures, Martin Ehler and Kasso Okoudjou~\cite{ehler2012random, ehler2013probabilistic} proposed the concept of probabilistic frames as defined below. 

\begin{definition}
$\mu \in  \mathcal{P}(\mathbb{R}^n)$ is called a \emph{probabilistic frame} if there exist $0<A \leq B < \infty$ such that for any ${\bf x} \in \mathbb{R}^n$, 
\begin{equation*}
    A \Vert {\bf x}  \Vert^2 \leq\int_{\mathbb{R}^n} \vert \left\langle {\bf x},{\bf y} \right\rangle \vert ^2 d\mu({\bf y})  \leq B \Vert {\bf x}  \Vert^2.
\end{equation*}
A probabilistic frame $\mu$ is called \emph{tight} if $A = B$ may be chosen and \emph{Parseval} if $A = B=1$. Moreover, $\mu$ is a \emph{Bessel probability measure} if an upper bound (but possibly not a lower) holds. 
\end{definition}

Clearly if $\mu \in  \mathcal{P}_2(\mathbb{R}^n)$, $\mu$ is Bessel with bound $M_2(\mu) $. One can also define the \emph{frame operator} ${\bf S}_\mu$ for a probabilistic frame $\mu$  as the matrix
    \begin{equation*}
          {\bf S}_\mu := \int_{\mathbb{R}^n}  {\bf y} {\bf y}^t d \mu({\bf y}),
    \end{equation*}
and probabilistic frames can be characterized by frame operators. 

 \begin{proposition}[{\cite[Theorem 12.1]{ehler2013probabilistic}, \cite[Proposition 3.1]{maslouhi2019probabilistic}}] \label{TAcharacterization}
Let $\mu  \in \mathcal{P}(\mathbb{R}^n)$. 
\begin{enumerate}
    \item $\mu$ is a probabilistic frame if and only if the frame operator ${\bf S}_{\mu}$ is positive definite, which is also equivalent to  $\mu \in \mathcal{P}_2(\mathbb{R}^n)$ and $\spa\{\supp(\mu)\} = \mathbb{R}^n$.
   
     \item  $\mu$ is a tight probabilistic frame with bound $A > 0$ if and only if ${\bf S}_{\mu} = A \ {\bf Id}$. 
\end{enumerate}
\end{proposition}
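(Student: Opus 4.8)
The plan is to reduce everything to the single quadratic-form identity
\begin{equation*}
   \left\langle {\bf S}_\mu {\bf x}, {\bf x}\right\rangle = \int_{\mathbb{R}^n} \left\langle {\bf x}, {\bf y}\right\rangle^2 \, d\mu({\bf y}), \quad \text{for all } {\bf x} \in \mathbb{R}^n,
\end{equation*}
which follows immediately from the fact that ${\bf y}{\bf y}^t$ is the rank-one matrix with $\left\langle {\bf y}{\bf y}^t {\bf x}, {\bf x}\right\rangle = \left\langle {\bf x}, {\bf y}\right\rangle^2$, together with linearity of the integral. Once this is in hand, the frame inequality becomes exactly the two-sided eigenvalue bound $A \Vert {\bf x}\Vert^2 \le \left\langle {\bf S}_\mu {\bf x}, {\bf x}\right\rangle \le B\Vert {\bf x}\Vert^2$, and part (1) is just a translation between the spectral properties of the symmetric matrix ${\bf S}_\mu$ and the geometry of $\supp(\mu)$.

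For part (1) I would prove the cycle (probabilistic frame) $\Rightarrow$ ($\mu \in \mathcal{P}_2(\mathbb{R}^n)$ and $\spa\{\supp(\mu)\} = \mathbb{R}^n$) $\Rightarrow$ (${\bf S}_\mu$ positive definite) $\Rightarrow$ (probabilistic frame). For the first implication, testing the upper bound on the standard basis vectors yields $\int y_i^2 \, d\mu < \infty$, and summing gives $M_2(\mu) < \infty$, so $\mu \in \mathcal{P}_2(\mathbb{R}^n)$ and ${\bf S}_\mu$ is a well-defined finite matrix; moreover, if the span were a proper subspace, any nonzero ${\bf x}$ orthogonal to $\supp(\mu)$ would make $\int \left\langle {\bf x}, {\bf y}\right\rangle^2 d\mu = 0$, contradicting the lower bound $A > 0$. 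For the second implication, $\mu \in \mathcal{P}_2(\mathbb{R}^n)$ guarantees ${\bf S}_\mu$ exists and the identity shows it is positive semidefinite; if $\left\langle {\bf S}_\mu {\bf x}, {\bf x}\right\rangle = 0$ then $\left\langle {\bf x}, {\bf y}\right\rangle = 0$ for $\mu$-a.e.\ ${\bf y}$, hence for every ${\bf y} \in \supp(\mu)$, so ${\bf x} \perp \spa\{\supp(\mu)\} = \mathbb{R}^n$ forces ${\bf x} = {\bf 0}$, giving positive definiteness. For the third implication, positive definiteness of the symmetric matrix ${\bf S}_\mu$ means its smallest and largest eigenvalues $A, B$ satisfy $0 < A \le B < \infty$ and $A\Vert {\bf x}\Vert^2 \le \left\langle {\bf S}_\mu {\bf x}, {\bf x}\right\rangle \le B\Vert {\bf x}\Vert^2$, which is the frame inequality via the identity.

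For part (2), tightness with bound $A$ means $\int \left\langle {\bf x}, {\bf y}\right\rangle^2 d\mu = A\Vert {\bf x}\Vert^2$ for all ${\bf x}$, i.e.\ $\left\langle {\bf S}_\mu {\bf x}, {\bf x}\right\rangle = \left\langle A\,{\bf Id}\,{\bf x}, {\bf x}\right\rangle$; since both ${\bf S}_\mu$ and $A\,{\bf Id}$ are symmetric, equality of the associated quadratic forms forces ${\bf S}_\mu = A\,{\bf Id}$ by polarization, and the converse is immediate from the identity. The only genuinely delicate point is the bookkeeping around well-definedness: the phrase ``${\bf S}_\mu$ is positive definite'' presupposes that ${\bf S}_\mu$ exists, so I would take care to extract $\mu \in \mathcal{P}_2(\mathbb{R}^n)$ from the upper frame bound before invoking the matrix, and to use the characterization of the support (that $\langle {\bf x}, \cdot\rangle$ vanishing $\mu$-a.e.\ implies it vanishes on all of $\supp(\mu)$) when passing between the almost-everywhere statement and the span condition.
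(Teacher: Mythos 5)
The paper does not prove \cref{TAcharacterization}; it is imported from \cite{ehler2013probabilistic} and \cite{maslouhi2019probabilistic}, so there is no in-paper argument to compare against. Your proof is correct and is essentially the standard one from those references: everything reduces to the quadratic-form identity $\langle {\bf S}_\mu {\bf x}, {\bf x}\rangle = \int \langle {\bf x},{\bf y}\rangle^2\, d\mu({\bf y})$, the cycle of implications in part (1) is sound (including the two points that actually require care, namely extracting $\mu\in\mathcal{P}_2(\mathbb{R}^n)$ from the upper bound before invoking ${\bf S}_\mu$, and using that an open $\mu$-null set cannot meet $\supp(\mu)$ to pass from the a.e.\ statement to the span condition), and the polarization step in part (2) is the right way to upgrade equality of quadratic forms of symmetric matrices to equality of the matrices themselves.
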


\begin{corollary}\label{cor:full measure contains basis}
    If $\mu$ is a probabilistic frame for $\R^n$, then any Borel set $E \subseteq \R^n$ with $\mu(E) = 1$ contains a basis for $\R^n$.
\end{corollary}
\begin{proof}
    Since $\mu$ is a probabilistic frame, \cref{TAcharacterization} implies that $\spa\{\supp(\mu)\} = \R^n$, so $\supp(\mu)$ contains a basis ${\bf x}_1, \dots {\bf x}_n$ for $\R^n$. By taking $r>0$ sufficiently small, we can guarantee that, if ${\bf y}_i \in B_r({\bf x}_i)$ for all $i = 1, \dots , n$, then ${\bf y}_1, \dots , {\bf y}_n$ is a basis for $\R^n$. Since $\mu(B_r({\bf x}_i)) > 0$, we know that $E \cap B_r({\bf x}_i) \neq \emptyset$, so there is some choice of such a basis which is contained in $E$.
\end{proof}

\subsection{Optimal Transport and Dual Probabilistic Frames}

Since probabilistic frames are contained in $\mathcal{P}_2(\mathbb{R}^n)$, we can use the $2$-Wasserstein metric $ W_2(\mu,\nu)$ from optimal transport to quantify the distance between probabilistic frames $\mu$ and $\nu$:  
$$ W_2(\mu,\nu) := \left ( \underset{\gamma \in \Gamma(\mu,\nu)}{\operatorname{inf}}  \int_{\mathbb{R}^n \times \mathbb{R}^n}  \left \Vert {\bf x}-{\bf y}\right \Vert^2 \ d\gamma({\bf x},{\bf y}) \right )^{\frac{1}{2}}.$$
Here $\Gamma(\mu,\nu)$ is the set of transport couplings with marginals $\mu$ and $\nu$, given by 
$$ \Gamma(\mu,\nu) =  \left \{ \gamma \in \mathcal{P}(\mathbb{R}^n \times \mathbb{R}^n): {\pi_{{ x}}}_{\#} \gamma = \mu, \ {\pi_{{ y}}}_{\#} \gamma = \nu \right \},$$
where $\pi_{{ x}}$, $\pi_{{ y}}$ are the projections on the ${\bf x}$ and ${\bf y}$ coordinates, that is to say,  for any $({\bf x},{\bf y}) \in \mathbb{R}^n \times \mathbb{R}^n $, $\pi_{{ x}}({\bf x}, {\bf y}) = {\bf x}, \pi_{{ y}}({\bf x},{\bf y}) = {\bf y}$. Moreover, if $\gamma^* \in \Gamma(\mu, \nu)$ is an optimizer for $ W_2(\mu,\nu)$, then $\gamma^*$ is called an optimal transport coupling for $ W_2(\mu,\nu)$. See~\cite{figalli2021invitation} for more details on Wasserstein distance and optimal transport.

Like finite frames, one can reconstruct signals by \emph{dual probabilistic frames}. Dual probabilistic frames are also known as \emph{transport duals}, and it has been shown that every dual probabilistic frame is a probabilistic frame \cite{wickman2014optimal}. 

\begin{definition}\label{dualDefinition}
Suppose $\mu$ is a probabilistic frame. A measure $\nu \in \mathcal{P}_2(\mathbb{R}^n)$ is a \emph{dual probabilistic frame} of $\mu$ if there exists $\gamma \in \Gamma(\mu, \nu)$ such that 
\begin{equation*}
   \int_{\mathbb{R}^n \times \mathbb{R}^n} {\bf x}{\bf y}^t d\gamma({\bf x}, {\bf y}) = {\bf Id}.
\end{equation*}
\end{definition}

One of the dual probabilistic frames of a frame $\mu$ is the \emph{canonical dual probabilistic frame} ${{\bf S}_\mu^{-1}}_{\#}\mu$ (with bounds $\frac{1}{B}$ and $ \frac{1}{A}$) with coupling $\gamma:= {({\bf Id}, {\bf S}_\mu^{-1})}_{\#}\mu \in \Gamma(\mu, {{\bf S}_\mu^{-1}}_{\#}\mu)$. 
Then, we have the following reconstruction: for any ${\bf f} \in \mathbb{R}^n$, 
\begin{equation*}
   {\bf f}=   \int_{\mathbb{R}^n} \left\langle  {\bf f}, {\bf S}_\mu^{-1} {\bf x} \right\rangle {\bf x} \ d\mu({\bf x}) =\int_{\mathbb{R}^n} \left\langle  {\bf S}_\mu^{-1} {\bf f}, {\bf x} \right\rangle {\bf x} \ d\mu({\bf x}). 
\end{equation*}

We also need the following gluing lemma, a standard tool in optimal transport, to ``glue'' two transport couplings together. Similarly to how $\pi_x$ and $\pi_y$ were defined, let $\pi_{xy}$ and $\pi_{yz}$ be the projections on the corresponding coordinates, i.e.,  for any $({\bf x}, {\bf y}, {\bf z}) \in \mathbb{R}^n \times \mathbb{R}^n \times \mathbb{R}^n$, 
\begin{equation*}
    \pi_x({\bf x}, {\bf y}, {\bf z}) = {\bf x}, \ \pi_y({\bf x}, {\bf y}, {\bf z}) = {\bf y}, \ \pi_{xy}({\bf x}, {\bf y}, {\bf z})=({\bf x}, {\bf y}), \ \pi_{yz}({\bf x}, {\bf y}, {\bf z})=({\bf y}, {\bf z}).
\end{equation*}

\begin{lemma}[Gluing Lemma {\cite[p.~59]{figalli2021invitation}}]\label{gluinglemma} 
Let $\mu_1, \mu_2, \mu_3 \in \mathcal{P}_2(\mathbb{R}^n)$. Suppose $\gamma^{12} \in \Gamma(\mu_1, \mu_2)$ and $\gamma^{23} \in \Gamma(\mu_2, \mu_3)$ such that ${\pi_y}_{\#}\gamma^{12} = \mu_2 = {\pi_x}_{\#}\gamma^{23}$. Then there exists $\gamma^{123} \in \mathcal{P}(\mathbb{R}^n \times \mathbb{R}^n \times \mathbb{R}^n)$ such that ${\pi_{xy}}_{\#}\gamma^{123} = \gamma^{12}$ and ${\pi_{yz}}_{\#}\gamma^{123} = \gamma^{23}$. 
\end{lemma}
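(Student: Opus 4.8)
The plan is to construct $\gamma^{123}$ by gluing the two given couplings along their common marginal $\mu_2$, using the disintegration (regular conditional probability) theorem, which applies because $\mathbb{R}^n$ is a Polish space. First I would disintegrate each coupling over $\mu_2$. Writing points of the triple product as $({\bf x}, {\bf y}, {\bf z})$: since ${\pi_y}_{\#}\gamma^{12} = \mu_2$, there is a $\mu_2$-almost everywhere uniquely determined family of probability measures $\{\gamma^{12}_{\bf y}\}_{{\bf y} \in \mathbb{R}^n}$ on the ${\bf x}$-copy of $\mathbb{R}^n$ such that ${\bf y} \mapsto \gamma^{12}_{\bf y}(B)$ is Borel measurable for every Borel set $B$ and $d\gamma^{12}({\bf x}, {\bf y}) = d\gamma^{12}_{\bf y}({\bf x})\, d\mu_2({\bf y})$. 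Likewise, since the first marginal of $\gamma^{23}$ is $\mu_2$, I would disintegrate $\gamma^{23}$ to obtain measurable conditionals $\{\gamma^{23}_{\bf y}\}_{{\bf y} \in \mathbb{R}^n}$ on the ${\bf z}$-copy of $\mathbb{R}^n$ with $d\gamma^{23}({\bf y}, {\bf z}) = d\mu_2({\bf y})\, d\gamma^{23}_{\bf y}({\bf z})$.

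Next I would define $\gamma^{123}$ as the measure obtained by, conditionally on ${\bf y}$, sampling ${\bf x}$ and ${\bf z}$ independently from the respective conditionals. Concretely, for every bounded Borel function $f$ on $\mathbb{R}^n \times \mathbb{R}^n \times \mathbb{R}^n$, set
\[
\int f \, d\gamma^{123} := \int_{\mathbb{R}^n} \int_{\mathbb{R}^n} \int_{\mathbb{R}^n} f({\bf x}, {\bf y}, {\bf z}) \, d\gamma^{12}_{\bf y}({\bf x}) \, d\gamma^{23}_{\bf y}({\bf z}) \, d\mu_2({\bf y}).
\]
Measurability of the inner double integral as a function of ${\bf y}$ follows from the measurability of the two disintegrations (first for $f$ of product form $u({\bf x})v({\bf y})w({\bf z})$, then extended by a monotone class argument), so this defines a genuine Borel measure. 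Taking $f \equiv 1$ and using that each $\gamma^{12}_{\bf y}$, $\gamma^{23}_{\bf y}$, and $\mu_2$ is a probability measure yields $\gamma^{123}(\mathbb{R}^n \times \mathbb{R}^n \times \mathbb{R}^n) = 1$, so $\gamma^{123} \in \mathcal{P}(\mathbb{R}^n \times \mathbb{R}^n \times \mathbb{R}^n)$.

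Then I would verify the two prescribed marginals. For a test function $g$ depending only on $({\bf x}, {\bf y})$, the innermost integral against $\gamma^{23}_{\bf y}$ returns $g({\bf x}, {\bf y})$ because $\gamma^{23}_{\bf y}$ has total mass one, so $\int g \circ \pi_{xy}\, d\gamma^{123} = \int_{\mathbb{R}^n}\int_{\mathbb{R}^n} g({\bf x}, {\bf y}) \, d\gamma^{12}_{\bf y}({\bf x})\, d\mu_2({\bf y}) = \int g \, d\gamma^{12}$ by the disintegration of $\gamma^{12}$; hence ${\pi_{xy}}_{\#}\gamma^{123} = \gamma^{12}$. The identity ${\pi_{yz}}_{\#}\gamma^{123} = \gamma^{23}$ is symmetric, obtained by integrating out ${\bf x}$ against $\gamma^{12}_{\bf y}$.

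The main obstacle is purely measure-theoretic: securing the existence and Borel dependence on ${\bf y}$ of the disintegrations $\{\gamma^{12}_{\bf y}\}$ and $\{\gamma^{23}_{\bf y}\}$. This is exactly where the Polish structure of $\mathbb{R}^n$ enters — the disintegration theorem supplies the conditional measures together with the measurability in ${\bf y}$ that makes the iterated integral above well-defined. Once that is in hand, the remaining steps are routine Fubini-type bookkeeping; note that the finite-second-moment hypothesis is not needed for this conclusion, since only membership in $\mathcal{P}$ is claimed for $\gamma^{123}$.
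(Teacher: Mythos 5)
Your proof is correct. The paper itself gives no proof of this lemma---it is stated and cited directly from Figalli's book---and your argument, disintegrating each coupling over the common marginal $\mu_2$ and then gluing by conditional independence given ${\bf y}$, is precisely the standard proof found in that reference; your closing observation that the finite-second-moment hypothesis plays no role here is also accurate.
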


\section{Duals of Probabilistic Frames and Redundancy}\label{redundancy}
Every frame has at least one dual frame, the canonical dual, and thus every vector in the underlying Hilbert space has at least one expansion in the form of~\eqref{eqn:dualexpansion}. This expansion is unique if and only if the frame is additionally a so-called Riesz basis. We can characterize the space of expansions by considering the \emph{synthesis operator} $U^\ast$ of a frame, 
\[
U^\ast: \ell^2(I) \rar \cH, \quad U^\ast\left( (\omega_i)_{i \in I}\right) = \sum_{i \in I} \omega_i \bof_i,
\]
which by the definition of a frame is bounded with bound $\sqrt{B}$ for $B$ the upper frame bound. Furthermore (see, e.g.,~\cite{heil2011basis}), a frame is a Riesz basis if and only if the synthesis operator is injective and if and only if removing any single element results in a collection which is no longer a frame. Inspired by this, the authors in~\cite{balan2003deficits} defined the \emph{redundancy} $R(F)$ of a frame $F=\{{\bf f}_i\}_{i \in I}$ in a Hilbert space $\mathcal{H}$ as its excess, which is the greatest number of elements that can be removed yet still leave a frame. They further showed that the excess of a frame is the dimension of the kernel of the associated synthesis operator; that is to say, 
$$R(F) = \sup\{ |G|: G \subset F \ \text{and} \  F \backslash G \ \text{is a frame}\} = \Dim (\kernel U^*) =\Dim((\range U)^{\perp}),$$
where $|G|$ is the cardinality of the set $G$ and $U^*$ is the synthesis operator of $F$.  Furthermore, it has been shown that given a frame, all of the associated dual frames, approximately dual frames, and pseudo-dual frames have the same redundancy \cite{bakic2015excesses}. 

Under this definition, any Riesz basis has zero excess or redundancy. The authors in, e.g., \cite{hemmat2001generalized,hosseini2013structure,jakobsen2016density, speckbacher2019frames}
then generalized this to define the redundancy of \emph{continuous frames} in a similar way. For a fixed measure space $(X,\mu)$ and Hilbert space $\mathcal{H}$, a measurable map $\Psi: X \rightarrow \mathcal{H}$ is called a \emph{continuous frame} for $\mathcal{H}$ if there exist $0 < A \leq B$ such that for any ${\bf y} \in \mathcal{H}$, 
$$ A \Vert {\bf y} \Vert^2 \leq  \int_X  \vert \left\langle {\bf y},\Psi_x \right\rangle \vert ^2 d\mu(x) \leq B \Vert {\bf y} \Vert^2.$$
The associated \emph{analysis operator} is given by
$U: \mathcal{H} \rightarrow L_2(X, \mu)$, $[U{\bf y}](x) = \left\langle {\bf y},\Psi_x \right\rangle$, and the synthesis operator is
$$U^\ast:  L_2(X, \mu) \rightarrow \mathcal{H}, \quad U^\ast(\omega) = \int_X \omega(x)\Psi_x d\mu(x),$$
where the integral is defined weakly, and $U$ and $U^\ast$ are indeed adjoints of each other.  We can interpret a Riesz basis $\{\bof_i\}_{i=1}^\infty$ as being a continuous frame with respect to the counting measure on $X =\mathbb{N}$, where $\Dim (\kernel(U^*))=0$.
This justifies the definition of redundancy for the continuous frame $\Psi$:
$$R(\Psi):= \Dim(\kernel U^\ast) = \Dim((\range U)^{\perp}). 
$$ 
Note that the redundancy of a continuous frame need not be finite. The authors in \cite{hemmat2001generalized,hosseini2013structure,jakobsen2016density,speckbacher2019frames} also showed that if the redundancy of a continuous frame is finite, the associated measure space $(X,\mu)$ must be atomic. 

Probabilistic frames may be viewed as a type of continuous frame; however, in this section we highlight results specifically concerning probabilistic frames.
\begin{proposition}[{\cite[Proposition 1.12]{wickman2014optimal}}] \label{ctsframe_and_probframe}
    Suppose $\mu \in  \mathcal{P}(\mathbb{R}^n)$ is a probabilistic frame for $\mathbb{R}^n$ with frame bounds $0<A \leq B$. Then the identity map $\mathbf{Id}: \mathbb{R}^n \rightarrow \mathbb{R}^n$
    is a continuous frame for $\mathbb{R}^n$ with respect to $\mu$ and has the same bounds $0<A \leq B$.
\end{proposition}

Given a probabilistic frame $\mu$, the associated analysis operator $U: \mathbb{R}^n \rightarrow L^2(\mu, \mathbb{R}^n)$ and the synthesis operator $U^*:  L^2(\mu, \mathbb{R}^n) \rightarrow \mathbb{R}^n$ are given by 
$$U {\bf f} = \langle {\bf f}, \cdot \rangle \quad  \textrm{and} \quad U^* \omega = \int_{\mathbb{R}^n} {\bf x} \omega ({\bf x}) d\mu({\bf x}),$$
with $U$ and $U^\ast$ being adjoints and $\kernel U^*= (\range U)^{\perp}$. It follows from \cref{ctsframe_and_probframe} that these operators are the same as the operators for the corresponding continuous frame.  Thus, the redundancy $R(\mu)$ of a probabilistic frame $\mu$ is also defined as the dimension of the kernel of its synthesis operator.

This definition of redundancy also works for Bessel probability measures, since the associated analysis and synthesis operators are well-defined. The following example shows that when $\mu$ is finitely and uniformly supported, the redundancy of $\mu$ is the excess of the associated finite frame. 
\begin{example}
\normalfont
    Let $\mu = \delta_1$ be a probabilistic frame for the real line $\mathbb{R}$. Note that $f \in L^2(\mu, \mathbb{R})$ is really an equivalence class $[f]$, and $g \in [f]$ if and only if $f(x)=g(x)$ for $\mu$ almost all $x \in \mathbb{R}$. Since $\mu = \delta_1$, all the functions in $L^2(\mu, \mathbb{R})$ are totally determined by their function values at $1$. Then,  $g \in [f]$ if and only if $f(1)=g(1)$.  Clearly, the constant function $c=f(1)$ is also a representative of the equivalence class $[f]$. Therefore, $L^2(\mu, \mathbb{R}) \cong \mathbb{R}$.
    
   The synthesis operator $U^*: \mathbb{R} \rightarrow \mathbb{R}$ of $\mu$ is given by 
$$U^* \omega = \int_{\mathbb{R}} x \omega d\mu(x) = \omega, \quad \omega \in \mathbb{R}.$$
Then $\kernel U^* = \{\omega \in \mathbb{R}: U^*\omega =0\} = \{0\}$. Hence, $R(\mu) =\Dim(\kernel U^*) = 0$.

More generally, let $\mu = \frac{1}{N}\sum_{i=1}^N \delta_{{\bf y}_i}$ be a probabilistic frame for $\bR^n$ where $\{{\bf y}_i\}_{i=1}^N\subset\bR^n$.  Then $L^2(\mu, \mathbb{R}^n) \cong \ell^2(\{1, 2, \cdots, N\})$ and the synthesis operator of $\mu$ is the same as the synthesis operator of the finite frame $\{{\bf y}_i\}_{i=1}^N$ up to a constant $\frac{1}{N}$. Therefore, the kernels of these two synthesis operators are the same, and thus the redundancy of $\mu$ is the excess of the associated finite frame $\{{\bf y}_i\}_{i=1}^N$. 
\end{example} 

It is not necessarily true that the redundancy is weakly continuous; that is, if $\{\mu_n\}$ is a sequence of probabilistic frames converging weakly to a probabilistic frame $\mu$, then it does not necessarily hold that $R(\mu_n) \rightarrow R(\mu)$.
For example, let $\mu = \delta_1 $ and $\mu_n = \frac{1}{2}\delta_1 + \frac{1}{2} \delta_{1 -\frac{1}{n+1}}$ for each $n$, which are frames for the real line. Then $R(\mu_n) = 1$ for each $n$ and $R(\mu) = 0$. Note that $\mu_n$ converges to $\mu = \delta_1 $ weakly, and furthermore, $W_2(\mu_n, \mu) = \frac{1}{\sqrt{2}}\frac{1}{n+1} \rightarrow 0$, but $R(\mu_n)$ does not converge to $R(\mu)$. Therefore, if two probabilistic frames are close to each other, their redundancies need not be close to each other.

Balan et al.~\cite{balan2003deficits} showed that given a frame $\{{\bf f}_i\}_{i \in I}$ with frame operator $\bf S$ in a Hilbert space $\mathcal{H}$, the excess of $\{{\bf f}_i\}_{i \in I}$ is given by
$$R(\{{\bf f}_i\}) = \sum_{i \in I} \left( 1- \langle {\bf f}_i, {\bf S}^{-1}{\bf f}_i \rangle\right).$$
We give a similar expression for the redundancy of probabilistic frames.
\begin{lemma}
    Suppose $\mu$ is a probabilistic frame and $\{w_i\}_{i \in I}$ an orthonormal basis for $L^2(\mu, \mathbb{R}^n)$. Then the redundancy $R(\mu)$ for $\mu$ is given by
    \begin{equation*}
        R(\mu) = \sum_{i \in I} \left(1- \langle {\bf w}_i, {\bf S}_\mu^{-1}{\bf w}_i \rangle\right),
    \end{equation*}
    where  ${\bf w}_i = \int_{\mathbb{R}^n} {\bf x} w_i({\bf x})d\mu({\bf x})$ for each $i \in I$. 
\end{lemma}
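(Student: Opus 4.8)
The plan is to realize $R_\mu = \Dim(\kernel U^\ast)$ as the trace of the orthogonal projection onto $\kernel U^\ast$, and then evaluate that trace diagonally in the orthonormal basis $\{w_i\}$. The starting point is the standard frame-theoretic fact that, since $U^\ast U = \bS_\mu$ is positive definite (hence invertible) by \Cref{TAcharacterization}, the operator $P := U \bS_\mu^{-1} U^\ast$ on $L^2(\mu,\R^n)$ is the orthogonal projection onto $\range U$.

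First I would verify this claim directly: $P$ is self-adjoint because $\bS_\mu^{-1}$ is symmetric and $(U\bS_\mu^{-1}U^\ast)^\ast = U (\bS_\mu^{-1})^\ast U^\ast = P$; it is idempotent because $P^2 = U\bS_\mu^{-1}(U^\ast U)\bS_\mu^{-1}U^\ast = U\bS_\mu^{-1}\bS_\mu\bS_\mu^{-1}U^\ast = P$; and it fixes $\range U$ pointwise since $PU = U\bS_\mu^{-1}\bS_\mu = U$, so $\range P = \range U$. Writing $I$ for the identity operator on $L^2(\mu,\R^n)$, the complementary projection $Q := I - P$ is then the orthogonal projection onto $(\range U)^\perp = \kernel U^\ast$, so that $R_\mu = \Dim(\range Q) = \trace Q$.

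Now I would expand $\trace Q$ in the orthonormal basis $\{w_i\}$, using $\norm{w_i}^2 = 1$, and then compute each diagonal entry via the adjoint relation $\ip{U\mathbf{a}}{\omega}_{L^2} = \ip{\mathbf{a}}{U^\ast\omega}_{\R^n}$ together with $U^\ast w_i = \mathbf{w}_i$ and the symmetry of $\bS_\mu^{-1}$:
\begin{equation*}
    R_\mu = \trace Q = \sum_{i=1}^\infty \ip{Q w_i}{w_i} = \sum_{i=1}^\infty \left(1 - \ip{P w_i}{w_i}\right), \qquad \ip{P w_i}{w_i} = \ip{U\bS_\mu^{-1}U^\ast w_i}{w_i} = \ip{\mathbf{w}_i}{\bS_\mu^{-1}\mathbf{w}_i}.
\end{equation*}
Substituting the second identity into the first gives exactly the claimed formula.

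The main obstacle is purely the infinite-dimensional bookkeeping of the trace, since in general $R_\mu = \infty$. I would handle this by noting that $Q$ is positive, so each summand $\ip{Q w_i}{w_i} = \norm{Q w_i}^2 \ge 0$; the series $\sum_i \ip{Q w_i}{w_i}$ is therefore unambiguously defined in $[0,\infty]$ and, by monotone convergence, independent of the chosen orthonormal basis. For an orthogonal projection this common value equals $\Dim(\range Q)$ (evaluate in a basis adapted to the splitting $\range Q \oplus \kernel Q$), which legitimizes $R_\mu = \trace Q$ whether the redundancy is finite or infinite. As a consistency check, each term $1 - \ip{\mathbf{w}_i}{\bS_\mu^{-1}\mathbf{w}_i} = \norm{Q w_i}^2$ lies in $[0,1]$.
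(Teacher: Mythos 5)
Your proposal is correct and follows essentially the same route as the paper: identify $I - U\bS_\mu^{-1}U^\ast$ as the orthogonal projection onto $\kernel U^\ast$ and compute its trace diagonally in the given orthonormal basis. The extra details you supply (verifying the projection identities and justifying the basis-independence of the trace when $R_\mu = \infty$) are sound and merely make explicit what the paper leaves implicit.
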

\begin{proof}
    Let $U: \mathbb{R}^n \rightarrow L^2(\mu, \mathbb{R}^n)$ and $U^*:  L^2(\mu, \mathbb{R}^n) \rightarrow \mathbb{R}^n$ be the analysis and synthesis operators for $\mu$. If $P$ is the orthogonal projection onto $\kernel U^*$, then $P = I - U{\bf S}_\mu^{-1}U^*$.  By definition, 
    \begin{equation*}
        \begin{split}
            R(\mu) &= \Dim(\kernel U^*) = \trace (P) = \sum_{i \in I} \langle w_i, Pw_i \rangle_{L^2(\mu)}\\ 
            &= \sum_{i \in I} \left(1 - \langle w_i, U{\bf S}_\mu^{-1}U^*w_i \rangle_{L^2(\mu)} \right) 
            =\sum_{i \in I} \left(1- \langle {\bf w}_i, {\bf S}_\mu^{-1}{\bf w}_i \rangle\right),
        \end{split}
    \end{equation*}
 where ${\bf w}_i = U^*w_i = \int_{\mathbb{R}^n} {\bf x} w_i({\bf x})d\mu({\bf x})$ for each $i \in I$. 
\end{proof}

Two frames $\{{{\bf f}_i}\}_{i \in I}$ and $\{{{\bf g}_i}\}_{i \in I}$ in a Hilbert space $\mathcal{H}$ are said to be equivalent if there exists an invertible bounded linear operator $L$ on $\mathcal{H}$ such that ${\bf g}_i = L{\bf f}_{i}$, for each $i \in I$. Baki{\'c} and Beri{\'c}~\cite{bakic2015excesses} showed that if $L$ is only bounded, the excess of $\{{L{\bf f}_i}\}_{i \in I}$ is not smaller than that of $\{{{\bf f}_i}\}_{i \in I}$, i.e., $R(\{{{\bf f}_i}\}_{i \in I}) \leq  R(\{{L{\bf f}_i}\}_{i \in I})$, and thus if two frames in $\mathcal{H}$ are equivalent, they have the same excess. We show a similar result for equivalent probabilistic frames. 

Two probabilistic frames $\mu$ and $\nu$ are \emph{equivalent} if there exists an invertible $n \times n$ matrix ${\bf A}$ such that $\nu = {\bf A}_\#\mu$. The assumption that $\mathbf{A}$ is invertible is essential: if ${\bf A}_\#\mu$ is a probabilistic frame, then ${\bf A}$ must be invertible. To see this, observe that ${\bf A}_\#\mu$ being a probabilistic frame implies, by \cref{TAcharacterization}, that $\spa(\supp({\bf A}_\#\mu)) = \mathbb{R}^n$. Since $\supp({\bf A}_\#\mu) \subset \range({\bf A})$ and $\range({\bf A})$ is a subspace, then $\spa(\range({\bf A})) = \mathbb{R}^n = \range({\bf A})$. Thus, ${\bf A}$ has full rank and hence is invertible.

\begin{lemma}\label{lem:equivalent implies same redundancy}
Suppose $\mu$ and $\nu$ are equivalent probabilistic frames. Then $\mu$ and $\nu$ have the same redundancy. 
\end{lemma}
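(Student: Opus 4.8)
The plan is to exploit the fact that pushforward by the invertible matrix $\mathbf{A}$ induces a unitary between the two $L^2$ spaces that intertwines the synthesis operators up to the invertible factor $\mathbf{A}$. Write $U_\mu^*$ and $U_\nu^*$ for the synthesis operators of $\mu$ and $\nu = \mathbf{A}_\#\mu$, so that $U_\mu^*\omega = \int_{\mathbb{R}^n}\mathbf{x}\,\omega(\mathbf{x})\,d\mu(\mathbf{x})$ and $U_\nu^*\eta = \int_{\mathbb{R}^n}\mathbf{y}\,\eta(\mathbf{y})\,d\nu(\mathbf{y})$. Since equivalence of probabilistic frames forces $\mathbf{A}$ to be invertible (as observed just before the statement), both $\mathbf{A}$ and $\mathbf{A}^{-1}$ are bimeasurable, so pushing functions back and forth respects null sets and hence $L^2$ equivalence classes.

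First I would define $\Phi\colon L^2(\nu,\mathbb{R}^n)\to L^2(\mu,\mathbb{R}^n)$ by $\Phi(\eta) = \eta\circ\mathbf{A}$ and check that it is a unitary isomorphism. The change-of-variables identity for pushforwards, $\int g\,d(\mathbf{A}_\#\mu) = \int (g\circ\mathbf{A})\,d\mu$, applied to $g = |\eta|^2$, gives $\|\Phi\eta\|_{L^2(\mu)} = \|\eta\|_{L^2(\nu)}$, so $\Phi$ is an isometry; its two-sided inverse is $\omega\mapsto\omega\circ\mathbf{A}^{-1}$, which lands in $L^2(\nu,\mathbb{R}^n)$ by the same identity, so $\Phi$ is onto. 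Thus $\Phi$ is unitary, and in particular a linear bijection.

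Next I would compute how $\Phi$ intertwines the synthesis operators. Using the same change of variables applied coordinatewise to the vector-valued integrand,
\[
U_\nu^*\eta = \int_{\mathbb{R}^n}\mathbf{y}\,\eta(\mathbf{y})\,d\nu(\mathbf{y}) = \int_{\mathbb{R}^n}\mathbf{A}\mathbf{x}\,\eta(\mathbf{A}\mathbf{x})\,d\mu(\mathbf{x}) = \mathbf{A}\int_{\mathbb{R}^n}\mathbf{x}\,(\Phi\eta)(\mathbf{x})\,d\mu(\mathbf{x}) = \mathbf{A}\,U_\mu^*(\Phi\eta),
\]
so that $U_\nu^* = \mathbf{A}\,U_\mu^*\,\Phi$. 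Since $\mathbf{A}$ is invertible, $\eta\in\kernel U_\nu^*$ if and only if $U_\mu^*(\Phi\eta) = \mathbf{0}$, i.e.\ if and only if $\Phi\eta\in\kernel U_\mu^*$. As $\Phi$ is a bijection, it restricts to a linear isomorphism from $\kernel U_\nu^*$ onto $\kernel U_\mu^*$, whence $R_\nu = \Dim(\kernel U_\nu^*) = \Dim(\kernel U_\mu^*) = R_\mu$.

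The only real subtlety — and the step I would be most careful about — is the verification that $\Phi$ is well defined and unitary at the level of $L^2$ equivalence classes: one must know that $\mathbf{A}$ being a linear bijection makes $\mu$-null and $\nu$-null sets correspond, so that $\eta = 0$ $\nu$-a.e.\ implies $\eta\circ\mathbf{A} = 0$ $\mu$-a.e.\ and conversely. Everything else is a direct change of variables, and, notably, no analysis of the (possibly infinite-dimensional) kernels themselves is required, since the argument transports the kernels by an honest bijection rather than comparing their dimensions by hand.
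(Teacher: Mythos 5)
Your proof is correct, and it shares its main construction with the paper's: both build the isometric isomorphism $L^2(\nu,\mathbb{R}^n)\to L^2(\mu,\mathbb{R}^n)$, $\eta\mapsto\eta\circ\mathbf{A}$, via the pushforward change of variables. Where you diverge is in how the conclusion is extracted. The paper does not intertwine the synthesis operators at all; instead it observes that for any probabilistic frame the analysis operator is bounded below, hence injective with closed $n$-dimensional range, so $\range U_\mu$ and $\range U_\nu$ are both $n$-dimensional closed subspaces of isomorphic Hilbert spaces, and therefore $\Dim\bigl((\range U_\mu)^\perp\bigr)=\Dim\bigl((\range U_\nu)^\perp\bigr)$. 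In effect the paper only uses the isomorphism to match the Hilbert dimensions of the two ambient $L^2$ spaces, and the argument would apply verbatim to any two probabilistic frames whose $L^2$ spaces are isomorphic. Your route proves the sharper structural fact that the unitary $\Phi$ satisfies $U_\nu^*=\mathbf{A}\,U_\mu^*\,\Phi$ and hence carries $\kernel U_\nu^*$ bijectively onto $\kernel U_\mu^*$; this is a little more work (one must verify the intertwining identity and the well-definedness of $\Phi$ on equivalence classes, which you correctly flag and which follows since $\nu$-null sets pull back to $\mu$-null sets under the Borel bijection $\mathbf{A}$), but it explains \emph{why} the redundancies agree rather than just counting codimensions, and it would survive in settings where the ranges of the analysis operators are not finite-dimensional. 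Both arguments are complete and correct.
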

\begin{proof}
Since $\mu$ and $\nu$ are equivalent, there exists an invertible $n \times n$ matrix ${\bf A}$ such that $\nu = {\bf A}_\#\mu$. We claim that $L^2(\mu, \mathbb{R}^n)$ is isometrically isomorphic to $L^2(\nu, \mathbb{R}^n)$. We define the isometric isometry  $T: L^2(\nu, \mathbb{R}^n) \rightarrow L^2(\mu, \mathbb{R}^n)$ as
\[
Tf = f \circ \mathbf{A}, \quad f \in L^2(\nu, \mathbb{R}^n). 
\]
Note that $T$ is well-defined and preserves norms, since 
\[
\|Tf\|_{L^2(\mu, \mathbb{R}^n)}^2 = \int_{\mathbb{R}^n} |f(\mathbf{A}{\bf x})|^2  d\mu({\bf x}) =  \int_{\mathbb{R}^n} |f({\bf y})|^2 d\nu({\bf y}) = \|f\|_{L^2(\nu, \mathbb{R}^n)}^2 < \infty.
\]
$T$ is clearly linear and preserves the inner product: for any $f, g \in L^2(\nu, \mathbb{R}^n)$,
\[
\langle Tf, Tg \rangle_{L^2(\mu, \mathbb{R}^n)} = \int_{\mathbb{R}^n} f(\mathbf{Ax}) g(\mathbf{Ax})  d\mu({\bf x}) = \int_{\mathbb{R}^n} f({\bf y}) g({\bf y}) d\nu({\bf y}) = \langle f, g \rangle_{L^2(\nu, {\mathbb{R}^n})}.
\]
Furthermore, $T$ is bijective. If $Tf = 0$, then $\|Tf\|_{L^2(\mu, \mathbb{R}^n)} = \|f\|_{L^2(\nu, \mathbb{R}^n)} = 0$ and thus $f = 0$ in $L^2(\nu, \mathbb{R}^n)$. Hence, $T$ is injective. For the surjective part, for any $g \in L^2(\mu, \mathbb{R}^n)$, define $f = g \circ \mathbf{A}^{-1}$. Then $Tf = f \circ \mathbf{A} = g \circ \mathbf{A}^{-1} \circ \mathbf{A} = g
$ and $f \in L^2(\nu, \mathbb{R}^n)$ since $T$ is an isometry. Therefore, $T: L^2(\nu, \mathbb{R}^n) \rightarrow L^2(\mu, \mathbb{R}^n)$ is an isometric isomorphism.

Now let $U_\mu$ and $U_{\nu}$ be the analysis operators for $\mu$ and $\nu$. Since $\mu$ is a probabilistic frame, the associated analysis operator $U_\mu: \mathbb{R}^n \rightarrow L^2(\mu, \mathbb{R}^n)$ is bounded below. Therefore, $U_\mu$ is injective and its range $\range U_\mu$ is closed. By the Rank–Nullity theorem, $\Dim(\range U_\mu) = n$. Similarly, we have $\Dim(\range U_\nu) = n$. Thus, $\range U_\mu$ is isomorphic to $\range U_\nu$ (and isomorphic to $\mathbb{R}^n$). Therefore, $\Dim((\range U_\mu)^\perp) = \Dim((\range U_\nu)^\perp)$, that is to say, $R(\mu) = R(\nu)$. Hence, $\mu$ and $\nu$ have the same redundancy.
\end{proof}

Conversely, probabilistic frames can be classified by redundancy. Let $\mathcal{P}_{++} \subset \mathcal{P}_2(\mathbb{R}^n)$ be the set of probabilistic frames and define a relation $\sim$ on $\mathcal{P}_{++}$ as follows: $\mu \sim \nu$ if and only if $R(\mu) = R(\nu)$. Clearly, the relation $\sim$ is an equivalence relation on $\mathcal{P}_{++}$, and thus probabilistic frames can be classified by the value of their redundancy. 
Note, however, that if two probabilistic frames have the same redundancy, i.e., they are in the same equivalence class under the relation $\sim$,  these two frames are not necessarily equivalent to each other. For example, let $\mu = \frac{1}{2}\delta_{1} +\frac{1}{2}\delta_{2}$ and $\nu = \frac{1}{2}\delta_{2} +\frac{1}{2}\delta_{5}$. Then $\mu$ and $\nu$ are probabilistic frames for the real line and $R(\mu) = R(\nu)=1$. However, there is no invertible linear transformation $A$ so that $A_\#\mu=\nu$, since the matrix for $A$ is a $1 \times 1$ matrix $\begin{bmatrix} a \end{bmatrix}$ for some $a \neq 0$ and hence $A_\#\mu = \frac{1}{2}\delta_{a} +\frac{1}{2}\delta_{2a}$, which cannot equal $\nu = \frac{1}{2}\delta_{2} +\frac{1}{2}\delta_{5}$. 

It is known that if a continuous frame has finite redundancy, the associated measure space $(X, \mu)$ must be atomic \cite{speckbacher2019frames}. See also related results in~\cite[Theorem 2]{hosseini2013structure},
\cite[Theorem 2.2]{hemmat2001generalized}, and~\cite[Proposition~3.3]{jakobsen2016density}.

\begin{proposition}[{\cite[Proposition 9]{speckbacher2019frames}}]\label{prop:fd RKHS atomic}
    If a continuous frame with measure space $(X, \mu)$ has finite redundancy, then $(X, \mu)$ is atomic.
\end{proposition}

Recall that a measurable set $A \in \mathcal{A}$ is called an atom of a measure $\mu$ on the measurable space $(X, \mathcal{A})$ if
$\mu(A) > 0$ and for any $B \subseteq A$ with $B \in \mathcal{A}$, 
$\mu(B) = 0$ or $\mu(B) = \mu(A)$.
$\mu$ is called atomic if every measurable set of positive measure contains an atom, and the measure space $(X, \mathcal{A}, \mu)$ is atomic if $\mu$ is atomic. A discrete measure is a countable sum of weighted Dirac measures, which is a special case of atomic measures. 
Not every atomic measure is discrete since it may have uncountably many atoms. However, if the measure $\mu$ is $\sigma$-finite on $(X, \mathcal{A})$, then $\mu$ has at most countably many atoms that are necessarily not dense, which in turn implies $\mu$ is also discrete \cite[p.~31]{chung2000course}. 

Combining \cref{ctsframe_and_probframe,prop:fd RKHS atomic}, we see that a probabilistic frame with finite redundancy is atomic. Since probabilistic frames are Borel probability measures on $\mathbb{R}^n$, they are $\sigma$-finite. Therefore, a probabilistic frame is atomic if and only if it is discrete. Then, probabilistic frames with finite redundancy must be discrete.

In fact, even more is true for probabilistic frames with finite redundancy: they must have finite support.

\begin{theorem}\label{thm:finredfinsupp}
A probabilistic frame $\mu$ for $\mathbb{R}^n$ has finite redundancy if and only if  $\mu$ is finitely supported.
\end{theorem}
\begin{proof}
Clearly,  if $\mu$ is finitely supported, then the dimension of the kernel of the associated synthesis operator is no greater than the cardinality of its support, which means the redundancy is finite. Conversely, if a probabilistic frame $\mu$ has finite redundancy, then by \cref{ctsframe_and_probframe}, the associated continuous frame has finite redundancy. By \cref{prop:fd RKHS atomic},
$\mu$ must be atomic, that is, $\mu = \sum_{k\in I} p_k \delta_{{\bf x}_k}$ where $\{{\bf x}_k\}_{k\in I} \subset \mathbb{R}^n$, $p_k>0, \sum_{k\in I} p_k = 1$, and $I$ is a countable index set. First, we claim $I$ is finite. Note that the associated synthesis operator
$U^*$ maps $L^2(\mu, \mathbb{R}^n)$ to $\mathbb{R}^n$. If $I$ is infinite, then $L^2(\mu, \mathbb{R}^n)$ is infinite-dimensional. The Rank–Nullity theorem implies that $\Dim(\kernel U^*)$ is also infinite-dimensional, and hence $R(\mu) = \Dim(\kernel U^*) = \infty$. Thus, if $R(\mu) < \infty$, the index set $I$ must be finite. Therefore, $\mu$ is finitely supported. 
\end{proof}

Given a probabilistic frame $\mu$, Clare Wickman and Kasso Okoudjou ~\cite{wickman2017duality} have shown that for each map $h: \mathbb{R}^n \rightarrow \mathbb{R}^n$ such that $h_\# \mu \in \mathcal{P}_2(\mathbb{R}^n)$, we can define a function $T:\mathbb{R}^n \rightarrow \mathbb{R}^n$ by
      \begin{equation*}
          T({\bf x}) := {{\bf S}_\mu^{-1}}{\bf x} + h({\bf x}) - \int_{\mathbb{R}^n} \langle {\bf S}_\mu^{-1}{\bf x}, {\bf y} \rangle h({\bf y}) d\mu({\bf y})
      \end{equation*}
so that $T_\# \mu$ is a dual probabilistic frame of $\mu$. We call a probabilistic frame of the form $T_\#\mu$ a frame of \emph{pushforward type}.

In the following, we give a characterization of dual frames of pushforward type. We also show that if the redundancy of a probabilistic frame is zero, the canonical dual is the only dual frame of pushforward type.

\begin{lemma}\label{dualChar}
    Suppose $\mu$ is a probabilistic frame and $T_\# \mu$ is a dual frame of $\mu$ where $T: \mathbb{R}^n \rightarrow \mathbb{R}^n$. Then $T$ must be of the form
    \begin{equation*}
        T({\bf x}) = {\bf S}_\mu^{-1}{\bf x} + h({\bf x}), 
    \end{equation*}
    where $h: \mathbb{R}^n \rightarrow \mathbb{R}^n$ is such that $h_\#\mu \in \mathcal{P}_2(\mathbb{R}^n)$ and $ \int_{\mathbb{R}^n } {\bf y} h({\bf y})^t d\mu({\bf y}) = {\bf 0}_{n \times n}$. 
\end{lemma}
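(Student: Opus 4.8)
The plan is to read off $h$ directly from $T$ and then translate the dual-frame condition into the stated moment constraint. Because $T_\#\mu$ is a dual frame of \emph{pushforward type}, the relevant coupling is the deterministic one $\gamma = ({\bf Id}\times T)_\#\mu \in \Gamma(\mu, T_\#\mu)$, for which \cref{dualDefinition} specializes to
\begin{equation*}
\int_{\mathbb{R}^n} {\bf x}\,T({\bf x})^t\, d\mu({\bf x}) = \int_{\mathbb{R}^n \times \mathbb{R}^n} {\bf x}{\bf y}^t\, d\gamma({\bf x},{\bf y}) = {\bf Id}.
\end{equation*}
This concrete identity is the only structural input I would use.

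First I would simply \emph{define} $h({\bf x}) := T({\bf x}) - {\bf S}_\mu^{-1}{\bf x}$, so that $T({\bf x}) = {\bf S}_\mu^{-1}{\bf x} + h({\bf x})$ holds by construction; the entire content is then to verify the two claimed properties of this $h$. For integrability I would work in $L^2(\mu, \mathbb{R}^n)$ and apply the triangle inequality $\|h\|_{L^2(\mu)} \le \|T\|_{L^2(\mu)} + \|{\bf S}_\mu^{-1}(\cdot)\|_{L^2(\mu)}$. The first term is finite because $\int_{\mathbb{R}^n} \|T({\bf x})\|^2 d\mu({\bf x}) = M_2(T_\#\mu) < \infty$ (a dual frame lies in $\mathcal{P}_2(\mathbb{R}^n)$), and the second because $\int_{\mathbb{R}^n} \|{\bf S}_\mu^{-1}{\bf x}\|^2 d\mu \le \|{\bf S}_\mu^{-1}\|^2 M_2(\mu) < \infty$. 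Hence $h \in L^2(\mu, \mathbb{R}^n)$, which is exactly the statement $\int_{\mathbb{R}^n} \|h({\bf x})\|^2 d\mu({\bf x}) = M_2(h_\#\mu) < \infty$, i.e.\ $h_\#\mu \in \mathcal{P}_2(\mathbb{R}^n)$.

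Next I would substitute the decomposition into the dual-frame identity. Using that ${\bf S}_\mu$ is symmetric, so $({\bf S}_\mu^{-1})^t = {\bf S}_\mu^{-1}$, I get
\begin{equation*}
{\bf Id} = \int_{\mathbb{R}^n} {\bf x}\,T({\bf x})^t d\mu = \Big(\int_{\mathbb{R}^n} {\bf x}{\bf x}^t d\mu\Big){\bf S}_\mu^{-1} + \int_{\mathbb{R}^n} {\bf x}\,h({\bf x})^t d\mu = {\bf S}_\mu {\bf S}_\mu^{-1} + \int_{\mathbb{R}^n} {\bf x}\,h({\bf x})^t d\mu,
\end{equation*}
and since ${\bf S}_\mu {\bf S}_\mu^{-1} = {\bf Id}$ this forces $\int_{\mathbb{R}^n} {\bf y}\,h({\bf y})^t d\mu({\bf y}) = {\bf 0}_{n\times n}$, as claimed. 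The matrix integral splits legitimately because each entry is the $L^2(\mu)$ pairing of a coordinate of ${\bf x}$ against a coordinate of $h$, both square-integrable by the previous step.

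The argument is essentially a direct computation, so I do not expect a deep obstacle; the only points requiring care are (i) recognizing that \emph{pushforward type} fixes the coupling to be $({\bf Id}\times T)_\#\mu$, which is precisely what converts the abstract \cref{dualDefinition} into the usable identity $\int {\bf x}\,T({\bf x})^t d\mu = {\bf Id}$, and (ii) justifying the $L^2$-integrability of $h$ so that the splitting of the matrix integral and its termwise evaluation are valid. Conceptually this lemma is the converse of the Wickman--Okoudjou construction recalled above: they build a dual from any admissible $h$, whereas here I extract from a given pushforward dual the unique $h$ satisfying the orthogonality constraint $\int {\bf y}\,h({\bf y})^t d\mu = {\bf 0}_{n\times n}$.
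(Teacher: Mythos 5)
Your proposal is correct and follows essentially the same route as the paper: take the deterministic coupling $({\bf Id},T)_\#\mu$ to turn the dual condition into $\int {\bf x}\,T({\bf x})^t\,d\mu = {\bf Id}$, define $h({\bf x}) = T({\bf x}) - {\bf S}_\mu^{-1}{\bf x}$, and read off the moment constraint. The only difference is that you spell out the $L^2(\mu)$ integrability of $h$, which the paper dismisses as ``easy to verify''; that added detail is welcome but not a departure in method.
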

This may be viewed as a corollary of a result concerning pseudo-dual probabilistic frames by taking ${\bf A =Id} $.  See~\cref{cor:psuedodualpush} for the proof.

\begin{proposition}\label{zeroRedundancy}
    Let $\mu$ be a probabilistic frame with zero redundancy and $T_\# \mu$ a dual probabilistic frame for $\mu$. Then, for $\mu$-almost all ${\bf x} \in \mathbb{R}^n$,  $T({\bf x}) = {\bf S}_\mu^{-1}{\bf x}.$
\end{proposition}
\begin{proof}
    Since $T_\# \mu$ is a dual frame for $\mu$, \cref{dualChar} implies that there exists $h: \mathbb{R}^n \rightarrow \mathbb{R}^n$ with $h_\#\mu \in \mathcal{P}_2(\mathbb{R}^n)$ and
    \begin{equation*}
        \int_{\mathbb{R}^n } {\bf z} h({\bf z})^t d\mu({\bf z}) = {\bf 0}_{n \times n},
    \end{equation*}
    so that $T({\bf x}) = {\bf S}_\mu^{-1}({\bf x}) + h({\bf x})$ for any ${\bf x} \in \R^n$.
Thus, for any ${\bf y} \in \mathbb{R}^n$, 
\begin{equation*}
        \int_{\mathbb{R}^n} {\bf z} \langle h({\bf z}), {\bf y} \rangle d\mu({\bf z}) = {\bf 0}.
\end{equation*}
This means that for every ${\bf y} \in \mathbb{R}^n$, $\langle h(\cdot), {\bf y} \rangle \in \kernel U^*$ where $U^*: L^2(\mu, \mathbb{R}^n) \rightarrow \mathbb{R}^n$ is the synthesis operator of $\mu$. However, $0=R(\mu)=\Dim(\kernel U^*)$, so $\kernel U^* = \{ 0\}$ where 0 should be seen as the zero function in $L^2(\mu, \mathbb{R}^n)$. Hence, for all  ${\bf y} \in \mathbb{R}^n$, the function $\langle h(\cdot), {\bf y} \rangle $ is the zero function in $L^2(\mu, \mathbb{R}^n)$, i.e., $\langle h(\cdot), \mathbf y \rangle = 0$ $\mu$-almost everywhere for each $\mathbf y \in \mathbb{R}^n$. Then $h({\bf x}) = \mathbf{0}$ for $\mu$-almost all $\bf x$, as desired.
\end{proof}

This generalizes the fact that for frames over separable Hilbert spaces, having zero redundancy and the canonical dual frame being the unique dual frame are equivalent (which are both in turn equivalent to being a Riesz basis). However, this is not necessarily true for probabilistic frames, since there can be dual probabilistic frames that are not the canonical dual and not of pushforward type. For example, $\delta_1$ is a frame for the real line and the redundancy of $\delta_1$ is zero. Any probability measure $\mu$ with mean value 1 and finite second moment is a dual frame of $\delta_1$ under the coupling $\gamma = (\bI, T)_\#\mu \in \Gamma(\mu, \delta_1)$, where $T:\mathbb{R} \rightarrow \mathbb{R}$ is given by $T(x) = 1, x \in \mathbb{R}$. For example, if we define $\mu = \frac{1}{2}\delta_{\frac{1}{2}} + \frac{1}{2}\delta_{\frac{3}{2}}$, then $\mu$ is a dual frame of $\delta_1$ and not the canonical dual frame. Of course, this does not violate \cref{zeroRedundancy} since there is no map $M: \mathbb{R} \rightarrow \mathbb{R}$ with $\mu = M_\# \delta_1 = \delta_{M(1)}$.

Putting~\cref{thm:finredfinsupp} and~\cref{zeroRedundancy} together with the example above, we see that any probabilistic frame $\mu$ over $\bR^n$ with zero redundancy ($R(\mu) = 0$) can be uniquely associated with a Riesz basis of $\mathbb{R}^n$. Then $\mu$ has a unique dual as a finite frame but only a unique dual of pushforward type as a probabilistic frame.

\section{Approximately Dual Probabilistic Frames}\label{approximateDual}
Approximately dual frames were introduced by Ole Christensen and Richard Laugesen in \cite{christensen2010approximately} and further studied in \cite{javanshiri2016some}. It is usually complicated to calculate a dual frame explicitly and people often seek methods to construct approximate duals that still yield perfect reconstruction. 

This problem is even more acute for probabilistic frames, since characterizing all the dual probabilistic frames for a given probabilistic frame is still a challenge \cite{chen2024general}.  Motivated by this issue, we introduce approximately dual probabilistic frames in this section. In what follows, we interpret $n \times n$ matrices $\bf A$ as operators and take the operator norm of $\bf A$:
$$\|{\bf A} \| =  \underset{{\bf x} \neq 0}{\text{sup}}\ \frac{\|{\bf Ax}\|}{\|{\bf x}\|}.$$

\begin{definition}\label{def:approximate dual}
     Let   $\mu$ be a probabilistic frame. $\nu \in \mathcal{P}_2(\mathbb{R}^n)$ is an \emph{approximately dual probabilistic frame} of $\mu$ if there exists $\gamma \in \Gamma(\mu, \nu)$ such that
     \begin{equation*}
     \Big \| \int_{\mathbb{R}^n \times \mathbb{R}^n} {\bf x}{\bf y}^t d\gamma({\bf x, y}) - {\bf Id} \Big \| < 1 .
\end{equation*}
Furthermore, $\int_{\mathbb{R}^n \times \mathbb{R}^n} {\bf x}{\bf y}^t d\gamma({\bf x, y})$ is called a \emph{mixed frame operator} of $\mu$ and $\nu$.
\end{definition}

Note that for a given probabilistic frame, its dual and approximately dual probabilistic frames and the associated mixed frame operators are dependent on transport couplings. Furthermore, by the symmetry of optimal transport, if $\nu$ is an approximately dual probabilistic frame to $\mu$, then $\mu$ is also an approximately dual probabilistic frame to $\nu$. We call $\mu$ and $\nu$ approximately dual pairs, and we will often shorten \emph{approximately dual probabilistic frame} to \emph{approximately dual frame} or \emph{approximate dual}.  Recall that given a frame, all the associated dual frames, approximately dual frames, and pseudo-dual frames have the same redundancy \cite{bakic2015excesses}. 
However, the redundancies of approximately dual probabilistic frames are not necessarily the same. For example, $\delta_1$ is a probabilistic frame for the real line, and $\nu = \delta_{\frac{1}{2}}$ is an approximate dual of $\delta_1$ with respect to $\delta_1 \otimes \nu$. Now define $\eta  := \frac{1}{2}\delta_{\frac{1}{2}} + \frac{1}{2}\delta_{\frac{1}{3}}$. Then, $\eta$ is also an approximate dual frame of $\delta_1$ with respect to $\delta_1 \otimes \eta$, since
\begin{equation*}
  \left|  \int_{\mathbb{R} \times \mathbb{R}} xz d\delta_1(x) d\eta(z) -1 \right| = \left| \frac{5}{12}-1 \right|= \frac{7}{12}<1.
\end{equation*}
However, $R(\nu) =0 \neq 1 = R(\eta)$. Similarly, the redundancies of dual probabilistic frames are not necessarily the same. Recall that any probability measure $\mu$ with mean value 1 and finite second moment is a dual frame of $\delta_1$ with respect to the coupling $\gamma = (\bI, T)_\#\mu \in \Gamma(\mu, \delta_1)$ where $T:\mathbb{R} \rightarrow \mathbb{R}$ is given by $T(x) = 1$.  Now define $\mu_1: = \frac{1}{2}\delta_{\frac{1}{2}} + \frac{1}{2}\delta_{\frac{3}{2}}$ and $\mu_2 := \frac{1}{3}\delta_{\frac{1}{2}} + \frac{1}{3}\delta_{1} + \frac{1}{3}\delta_{\frac{3}{2}}$. Note that $\mu_1$ and $\mu_2$ are dual probabilistic frames of $\delta_1$, but $R(\mu_1) = 1 \neq 2 = R(\mu_2)$.

In ~\cref{def:approximate dual} we did not require that an approximately dual probabilistic frame actually be a probabilistic frame, though in fact it will be. More than that, there is an uncertainty principle for approximate dual pairs, which was previously studied for probabilistic dual pairs in Corollary 5.4 of \cite{chen2024general}. Furthermore, based on this result, we get the relationship between optimal frame bounds for a probabilistic frame and its approximate duals. The relationship between optimal frame bounds of a frame and its duals in Hilbert spaces has been considered by many authors, for example, in  \cite{razghandi2020characterization}.

\begin{lemma}\label{DualIsFrame}
Let $\mu$ be a probabilistic frame with upper frame bound $B>0$.  Suppose $\nu \in \mathcal{P}_2(\mathbb{R}^n)$ is an approximate dual of $\mu$ with mixed frame operator $\bf A$, then $\nu$ is a probabilistic frame with bounds $ 1/(B \| {\bf A}^{-1}\|^2)$ and $M_2(\nu)$. 

In addition, for any ${\bf f} \in \mathbb{R}^n$, we have the following uncertainty principle:
     \begin{equation*}
        \int_{\mathbb{R}^n} |\langle {\bf A }^{-1} {\bf x}, {\bf f} \rangle |^2 d\mu({\bf x}) \int_{\mathbb{R}^n} |\langle {\bf y}, {\bf f} \rangle |^2 d\nu({\bf y})  \geq \| {\bf f} \|^4,
    \end{equation*}
and equality is attained for all ${\bf f} \in \mathbb{R}^n$  if and only if  ${\bf S}_{\mu}=c{\bf A}{\bf A}^{t}$ and
$\nu=(\frac{1}{c}{\bf A}^{-1})_{\#}\mu$ where $c$ is a nonzero constant.
Furthermore, if $A_\mu$, $B_\mu$, and $A_\nu$, $B_\nu$ are optimal frame bounds for $\mu$ and $\nu$, then 
\begin{equation*}
    A_\nu \geq \frac{1}{B_\mu \| {\bf A}^{-1}\|^2} \ \text{and} \  A_\mu \geq \frac{1}{B_\nu \| {\bf A}^{-1}\|^2}.
\end{equation*}
\end{lemma}

\begin{proof}
 Since $\nu \in \mathcal{P}_2(\mathbb{R}^n)$ is an approximate dual of $\mu$ with mixed frame operator $\bf A$, then  $\bf A$ is invertible with $\norm{\bA - \bI}<1$, and there exists $\gamma \in \Gamma(\mu, \nu)$ such that 
   $$   \int_{\mathbb{R}^n \times \mathbb{R}^n} {\bf x} {\bf y}^t d\gamma({\bf x, y}) = {\bf A}.$$
   Then, for any ${\bf f} \in \mathbb{R}^n$, 
   \begin{equation*}
   \begin{split}
       \|{\bf f}\|^4 = |\langle {\bf f}, {\bf f} \rangle|^2 = |\langle {\bf A}^{-1}{\bf A}{\bf f}, {\bf f} \rangle|^2 &= \left|\int_{\mathbb{R}^n \times \mathbb{R}^n} \langle {\bf A}^{-1} {\bf x}, {\bf f} \rangle \langle {\bf y,f} \rangle d\gamma({\bf x, y}) \right |^2 \\
       & \leq \int_{\mathbb{R}^n} |\langle {\bf A }^{-1} {\bf x}, {\bf f} \rangle |^2 d\mu({\bf x}) \int_{\mathbb{R}^n} |\langle {\bf y}, {\bf f} \rangle |^2 d\nu({\bf y}) \\
       & \leq B \|({\bf A }^{-1})^t {\bf f}\|^2 \int_{\mathbb{R}^n} |\langle {\bf y}, {\bf f} \rangle |^2 d\nu({\bf y}) \\
       &\leq B\|{\bf A }^{-1}\|^2 \| {\bf f}\|^2 \int_{\mathbb{R}^n} |\langle {\bf y}, {\bf f} \rangle |^2 d\nu({\bf y}),
   \end{split}
   \end{equation*}
where the first inequality is due to the Cauchy–Schwarz inequality and the second follows from the frame property of $\mu$. Note that we have proved the desired uncertainty principle along the way. Therefore, for any ${\bf f} \in \mathbb{R}^n$,
\begin{equation*}
       \frac{\|{\bf f}\|^2}{B \|{\bf A }^{-1}\|^2}
       \leq  \int_{\mathbb{R}^n} |\langle {\bf y}, {\bf f} \rangle |^2 d\nu({\bf y}) \leq M_2(\nu)\|{\bf f}\|^2,
   \end{equation*}
 which implies that $\nu$ is a probabilistic frame with bounds $ 1/(B \| {\bf A}^{-1}\|^2)$ and $M_2(\nu)$. 

In addition, for a fixed ${\bf f}$, equality holds in the uncertainty principle if and only if the two scalar functions
\[
({\bf x},{\bf y})\longmapsto \langle {\bf A}^{-1}{\bf x},{\bf f}\rangle
\quad\text{and}\quad
({\bf x},{\bf y})\longmapsto \langle {\bf y},{\bf f}\rangle
\]
are linearly dependent in $L^{2}(\gamma, \mathbb{R}^n \times \mathbb{R}^n)$; that is, there exists a scalar
$c({\bf f})$ that may depend on ${\bf f}$ such that
\begin{equation}\label{eq:inner product ae}
\langle {\bf A}^{-1}{\bf x},{\bf f}\rangle
= c({\bf f}) \langle {\bf y},{\bf f}\rangle
\ \text{for $\gamma$-almost every }({\bf x},{\bf y}).
\end{equation}

We claim that if the equality is true for all $\mathbf{f} \in \mathbb{R}^n$, then $c({\bf f})$ is a constant, independent of ${\bf f}$. 

To see this, note first that $c(\alpha {\bf f}) = c({\bf f})$ for all $\alpha \neq 0$. 

Now, let $E$ be the set on which~\eqref{eq:inner product ae} holds. Then $E$ has full $\gamma$-measure and hence the projection $\pi_y(E)$ has full $\nu$-measure. Therefore, by \cref{cor:full measure contains basis} there exist ${\bf x}_1, \dots , {\bf x}_n, {\bf y}_1, \dots {\bf y}_n \in \R^n$ so that ${\bf y}_1, \dots {\bf y}_n$ is a basis for $\R^n$ and $({\bf x}_i, {\bf y}_i) \in E$ for all $i=1, \dots , n$.

For any $i=1, \dots , n$, suppose ${\bf f} \in \R^n$ is a unit vector with $\langle {\bf y}_i, {\bf f} \rangle \neq 0$. Write 
\[
    {\bf f} = \alpha {\bf y}_i + {\bf g},
\]
where $\alpha \neq 0$ and ${\bf g} \in {\bf y}_i^\perp$. Then
\[
\langle {\bf A}^{-1}{\bf x}_i,\alpha {\bf y}_i\rangle
= c(\alpha {\bf y}_i) \langle {\bf y}_i,\alpha {\bf y}_i\rangle = c({\bf y}_i) \langle {\bf y}_i,\alpha {\bf y}_i\rangle
\]
and
\[
    \langle {\bf A}^{-1}{\bf x}_i,{\bf g}\rangle = c({\bf g}) \langle {\bf y}_i,{\bf g}\rangle = 0
\]
Therefore,
\[
    c({\bf f}) \alpha \|{\bf y}_i\|^2 = c({\bf f})\langle {\bf y}_i, {\bf f}\rangle = \langle {\bf A}^{-1} {\bf x}_i , {\bf f} \rangle = \langle {\bf y}_i , c({\bf y}_i) \alpha {\bf y}_i\rangle = c({\bf y}_i) \alpha \|{\bf y}_i\|^2,
\]
so $c({\bf f}) = c({\bf y}_i)$. 

In other words, $c$ is constant on unit vectors in $\R^n \backslash{\bf y}_i^\perp$ and hence, by the scale-invariance of $c$, on all of each $\R^n \backslash{\bf y}_i^\perp$. Since these sets have nontrivial intersections, we see that all constants are the same, so $c$ is constant on 
$$\bigcup_{i=1}^n \left(\R^n \backslash {\bf y}_i^\perp\right) = \R^n \backslash \left(\bigcap_{i=1}^n {\bf y}_i^\perp\right).
$$
Since ${\bf y}_1, \dots {\bf y}_n$ is a basis for $\R^n$, the intersection $\bigcap_{i=1}^n {\bf y}_i^\perp = \{{\bf 0}\}$, so $c$ is constant on $\R^n \backslash \{{\bf 0}\}$.

Moreover, re-defining $c({\bf 0})$ to equal this common constant doesn't affect the validity of~\eqref{eq:inner product ae}, so we see that $c$ can be chosen to be constant on all of $\R^n$.

Therefore, we have shown if equality in the uncertainty principle is true for all ${\bf f} \in \mathbb{R}^n$,  then
$$\langle {\bf A}^{-1}{\bf x},{\bf f}\rangle
= c \langle {\bf y},{\bf f}\rangle
\ \text{for $\gamma$-almost every }({\bf x},{\bf y}),$$ which implies that $\gamma = (\mathbf{Id}, \frac{1}{c}{\bf A}^{-1})_\#\mu$. Since $\gamma \in \Gamma(\mu, \nu)$, then $\nu = (\frac{1}{c}{\bf A}^{-1})_\#\mu$. Also $\int_{\mathbb{R}^n \times \mathbb{R}^n} {\bf x} {\bf y}^t d\gamma({\bf x, y}) = {\bf A}$ implies that ${\bf S}_{\mu}=c{\bf A}{\bf A}^{t}$.

On the other hand, suppose ${\bf S}_{\mu}=c{\bf A}{\bf A}^{t}$ and
$\nu=(\frac{1}{c}{\bf A}^{-1})_{\#}\mu$ for some nonzero constant $c$.
Then $\nu$ is an approximately dual frame of $\mu$ with mixed frame operator ${\bf A}$ against $\gamma = (\mathbf{Id}, \frac{1}{c}{\bf A}^{-1})_\#\mu \in \Gamma(\mu, \nu)$, which follows from 
   $$   \int_{\mathbb{R}^n \times \mathbb{R}^n} {\bf x} {\bf y}^t d\gamma({\bf x, y}) = \frac{1}{c} \int_{\mathbb{R}^n} {\bf x} {\bf x}^t d\mu({\bf x}) ({\bf A}^{-1})^t = \frac{1}{c} {\bf S}_{\mu} ({\bf A}^{-1})^t = {\bf A}.
   $$
Since  $\gamma = (\mathbf{Id}, \frac{1}{c}{\bf A}^{-1})_\#\mu$, then for $\gamma$-almost every $({\bf x},{\bf y})$, ${\bf y}=\frac{1}{c}{\bf A}^{-1}{\bf x}$. Thus for any fixed ${\bf f}$, 
$\langle {\bf A}^{-1}{\bf x},{\bf f}\rangle
= c \langle {\bf y},{\bf f}\rangle$, for $\gamma$-almost every $({\bf x},{\bf y})$. 
Therefore, equality in the uncertainty principle holds for all $\mathbf{f} \in \mathbb{R}^n$.

 Finally, if $A_\mu$, $B_\mu$ and $A_\nu$, $B_\nu$ are optimal bounds for $\mu$ and $\nu$, then the above result gives $ A_\nu \geq \frac{1}{B_\mu \| {\bf A}^{-1}\|^2}$, and similarly,  for any ${\bf f} \in \mathbb{R}^n$, we have 
   \begin{equation*}
   \begin{split}
       \|{\bf f}\|^4  = |\langle ({\bf A}^{-1})^t{\bf A}^t{\bf f}, {\bf f} \rangle|^2 &= \left|\int_{\mathbb{R}^n \times \mathbb{R}^n} \langle ({\bf A}^{-1})^t {\bf y}, {\bf f} \rangle \langle {\bf x,f} \rangle d\gamma({\bf x, y}) \right |^2 \\
       & \leq  \int_{\mathbb{R}^n} |\langle {\bf y}, {\bf A}^{-1} {\bf f} \rangle |^2 d\nu({\bf y}) \int_{\mathbb{R}^n} |\langle {\bf x}, {\bf f} \rangle |^2 d\mu({\bf x})\\
       &\leq B_\nu \|{\bf A }^{-1}\|^2 \| {\bf f}\|^2 \int_{\mathbb{R}^n} |\langle {\bf x}, {\bf f} \rangle |^2 d\mu({\bf x}).
   \end{split}
   \end{equation*}
Therefore, $\frac{1}{B_\nu \| {\bf A}^{-1}\|^2}$ is a lower frame bound for $\mu$ and thus $A_\mu \geq \frac{1}{B_\nu \| {\bf A}^{-1}\|^2}$.
\end{proof}

If $\nu \in \mathcal{P}_2(\mathbb{R}^n)$ is a dual probabilistic frame of $\mu$, then the mixed frame operator ${\bf A} = {\bf Id}$, and thus for any ${\bf f}$ in the unit sphere $ \mathbb{S}^{n-1}$, the uncertainty principle is 
     \begin{equation*}
        \int_{\mathbb{R}^n} |\langle {\bf x}, {\bf f} \rangle |^2 d\mu({\bf x}) \int_{\mathbb{R}^n} |\langle {\bf y}, {\bf f} \rangle |^2 d\nu({\bf y})  \geq 1.
    \end{equation*}
Furthermore, if $A_\mu$, $B_\mu$, and $A_\nu$, $B_\nu$ are optimal frame bounds for $\mu$ and $\nu$, then 
\begin{equation*}
    A_\nu \geq \frac{1}{B_\mu } \ \text{and} \ A_\mu \geq \frac{1}{B_\nu},
\end{equation*}
where equality holds if $\nu$ is the canonical dual frame of $\mu$. 

\begin{lemma}
    Let $\mu$ be a probabilistic frame.  The set of approximately dual probabilistic frames of $\mu$ is convex.
\end{lemma}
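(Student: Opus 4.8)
The plan is to exploit the fact that the mixed frame operator depends \emph{linearly} on the coupling, so that convexity of the set of approximate duals follows from convexity of the set of couplings together with the triangle inequality. Concretely, suppose $\nu_0, \nu_1 \in \mathcal{P}_2(\mathbb{R}^n)$ are approximate duals of $\mu$ and fix $t \in [0,1]$; I want to show that $\nu_t := (1-t)\nu_0 + t\nu_1$ is again an approximate dual of $\mu$.

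First I would choose witnessing couplings: by \cref{def:approximate dual} there exist $\gamma_0 \in \Gamma(\mu, \nu_0)$ and $\gamma_1 \in \Gamma(\mu, \nu_1)$ with $\norm{M(\gamma_i) - \bI} < 1$ for $i = 0,1$, where I write $M(\gamma) := \int_{\mathbb{R}^n \times \mathbb{R}^n} {\bf x}{\bf y}^t \, d\gamma({\bf x}, {\bf y})$ for the mixed frame operator. The natural candidate coupling for $\nu_t$ is the convex combination $\gamma_t := (1-t)\gamma_0 + t\gamma_1$, which is again a Borel probability measure on $\mathbb{R}^n \times \mathbb{R}^n$. I would then verify the marginals: since pushforward by a fixed map is linear on measures, $(\pi_x)_\# \gamma_t = (1-t)\mu + t\mu = \mu$ and $(\pi_y)_\# \gamma_t = (1-t)\nu_0 + t\nu_1 = \nu_t$, so $\gamma_t \in \Gamma(\mu, \nu_t)$. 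One also checks directly that $\nu_t \in \mathcal{P}_2(\mathbb{R}^n)$, since $M_2(\nu_t) = (1-t)M_2(\nu_0) + t\,M_2(\nu_1) < \infty$.

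The key step is that $M$ is affine in its argument: integrating ${\bf x}{\bf y}^t$ against $\gamma_t$ gives $M(\gamma_t) = (1-t)M(\gamma_0) + t\,M(\gamma_1)$. Subtracting $\bI = (1-t)\bI + t\bI$ and applying the triangle inequality for the operator norm yields
\begin{equation*}
    \norm{M(\gamma_t) - \bI} \leq (1-t)\norm{M(\gamma_0) - \bI} + t\,\norm{M(\gamma_1) - \bI} < (1-t) + t = 1.
\end{equation*}
Hence $\gamma_t$ witnesses $\nu_t$ as an approximate dual of $\mu$, which proves that the set of approximate duals is convex.

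I do not anticipate a genuine obstacle here: the whole argument rests on two elementary observations — that a convex combination of couplings in $\Gamma(\mu, \nu_0)$ and $\Gamma(\mu, \nu_1)$ lands in $\Gamma(\mu, (1-t)\nu_0 + t\nu_1)$, and that $\gamma \mapsto M(\gamma)$ is affine. The only point that requires a moment's care is resisting the temptation to construct a transport map directly between $\mu$ and $\nu_t$; instead one works at the level of couplings, where linearity lets the strict inequality $<1$ survive the convex combination.
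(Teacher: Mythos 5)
Your proposal is correct and follows essentially the same route as the paper: take the convex combination of the two witnessing couplings, use linearity of the mixed frame operator in the coupling, and conclude by the triangle inequality. Your verification of the marginals and of the finite second moment is more explicit than the paper's, but the argument is the same.
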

\begin{proof}
Suppose $\nu_1$ and $ \nu_2$ are approximately dual probabilistic frames of $\mu$ with respect to $\gamma_1 \in \Gamma(\mu, \nu_1)$ and  $\gamma_2 \in \Gamma(\mu, \nu_2)$. Let $0 \leq w \leq 1$ and define $\nu := w \nu_1 + (1-w)\nu_2 $ and $ \gamma := w\gamma_1+(1-w)\gamma_2 \in \Gamma(\mu, \nu)$. Then
    \begin{equation*}
    \begin{split}
        \Big \| \int_{\mathbb{R}^n \times \mathbb{R}^n} {\bf x}{\bf y}^t d\gamma({\bf x, y}) - {\bf Id} \Big \| &\leq   w \Big \| \int_{\mathbb{R}^n \times \mathbb{R}^n} {\bf x}{\bf y}^t d\gamma_1({\bf x, y}) - {\bf Id} \Big \| \\
        & +(1-w) \Big \| \int_{\mathbb{R}^n \times \mathbb{R}^n} {\bf x}{\bf y}^t d\gamma_2({\bf x, y}) - {\bf Id} \Big \| < 1. 
    \end{split}
\end{equation*}
Therefore, $\nu = w \nu_1 + (1-w)\nu_2$ is an approximately dual frame of $\mu$. 
\end{proof}

One might hope that the convexity of the set of approximately dual frames for a given probabilistic frame would allow us to simply find the extreme points and use the Krein–Milman theorem to characterize the set.  However, Krein–Milman requires the convex set to additionally be compact, and the set of approximately dual probabilistic frames for a given probabilistic frame is not even weakly closed in $\mathcal{P}_2(\mathbb{R}^n)$. To show this, we return to our favorite example of $\delta_1$ on the real line. For each $n>0$, define $\nu_n = \delta_{\frac{1}{n+1}}$. Then, for any $n>0$, $\nu_n$ is an approximately dual frame to $\delta_1$ with respect to the product measure $\delta_1 \otimes \nu_n$, since
\begin{equation*}
    \left | \int x y d\delta_1(x)d\nu(y)  -1 \right | = 1- \frac{1}{n+1} <1.
\end{equation*}
Note that $\nu_n$ converges weakly to $\delta_0$. However,  $\delta_0$ is not a probabilistic frame (since $\int_{\bR} y y d \delta_0(y) =0$) and thus not an approximately dual frame of $\delta_1$. Then, the set of approximately dual frames for $\delta_1$ is not weakly closed and thus not weakly compact. 

Although it is difficult to characterize all the approximate duals, the following results show that given an approximately dual frame,  we can get one dual frame and construct infinitely many approximately dual frames with arbitrary closeness to the perfect reconstruction. 

\begin{lemma}
    Let $\mu$ be a probabilistic frame.  If $\nu \in \mathcal{P}_2(\mathbb{R}^n)$ is an approximate dual frame of $\mu$ and ${\bf A}$ is the transpose of their mixed frame operator, then ${{\bf A}^{-1}}_\#\nu$ is a dual probabilistic frame to $\mu$.
\end{lemma}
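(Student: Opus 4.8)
The plan is to realize the duality condition through an explicit coupling built from the one already witnessing approximate duality. Write ${\bf M} := \int_{\mathbb{R}^n \times \mathbb{R}^n} {\bf x}{\bf y}^t \, d\gamma({\bf x, y})$ for the mixed frame operator associated to a coupling $\gamma \in \Gamma(\mu, \nu)$ certifying that $\nu$ is an approximate dual, so that $\|{\bf M} - {\bf Id}\| < 1$ and, by hypothesis, ${\bf A} = {\bf M}^t$. The first observation is that $\|{\bf M} - {\bf Id}\| < 1$ forces ${\bf M}$, and hence ${\bf A} = {\bf M}^t$, to be invertible, with $({\bf A}^{-1})^t = {\bf M}^{-1}$; this is exactly the invertibility already exploited in \cref{DualIsFrame}.

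The key idea is to transport $\gamma$ through its second coordinate. I would set $\tilde\gamma := ({\bf Id} \times {\bf A}^{-1})_\# \gamma$, the pushforward of $\gamma$ under the map $({\bf x}, {\bf y}) \mapsto ({\bf x}, {\bf A}^{-1}{\bf y})$. Because this map leaves the ${\bf x}$-coordinate untouched, the first marginal of $\tilde\gamma$ is still $\mu$, while the second marginal is ${\bf A}^{-1}_\# \nu$ since the second marginal of $\gamma$ is $\nu$. Thus $\tilde\gamma \in \Gamma(\mu, {\bf A}^{-1}_\# \nu)$, which is the correct space of couplings to test against \cref{dualDefinition}.

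It then remains to compute the mixed frame operator of $\mu$ and ${\bf A}^{-1}_\# \nu$ against $\tilde\gamma$. Using the change of variables for pushforwards,
\begin{equation*}
\int_{\mathbb{R}^n \times \mathbb{R}^n} {\bf x}{\bf z}^t \, d\tilde\gamma({\bf x, z}) = \int_{\mathbb{R}^n \times \mathbb{R}^n} {\bf x}({\bf A}^{-1}{\bf y})^t \, d\gamma({\bf x, y}) = \left(\int_{\mathbb{R}^n \times \mathbb{R}^n} {\bf x}{\bf y}^t \, d\gamma({\bf x, y})\right)({\bf A}^{-1})^t = {\bf M}\,{\bf M}^{-1} = {\bf Id},
\end{equation*}
where the penultimate equality uses $({\bf A}^{-1})^t = {\bf M}^{-1}$. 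This verifies the dual-frame identity. A final routine check is that ${\bf A}^{-1}_\# \nu$ lies in $\mathcal{P}_2(\mathbb{R}^n)$, which follows from $\int_{\mathbb{R}^n} \|{\bf A}^{-1}{\bf y}\|^2 \, d\nu({\bf y}) \le \|{\bf A}^{-1}\|^2 M_2(\nu) < \infty$ since $\nu \in \mathcal{P}_2(\mathbb{R}^n)$.

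There is no serious obstacle here: the statement is essentially a bookkeeping exercise in transposes and marginals. The only point requiring genuine care is getting the transpose right — one must use the hypothesis ${\bf A} = {\bf M}^t$ (rather than ${\bf M}$ itself) precisely so that $({\bf A}^{-1})^t = {\bf M}^{-1}$ cancels ${\bf M}$ on the correct side. Pushing $\nu$ forward by $({\bf M}^t)^{-1}$ rather than by ${\bf M}^{-1}$ is exactly what makes the matrix product collapse to ${\bf Id}$, and this is the sole place where the definition of ${\bf A}$ as the \emph{transpose} of the mixed frame operator is essential.
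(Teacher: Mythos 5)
Your proof is correct and follows essentially the same route as the paper: both define the coupling $\tilde\gamma = (\mathbf{Id}, \mathbf{A}^{-1})_\#\gamma$, use $\|\mathbf{M}-\mathbf{Id}\|<1$ to invert, and verify the duality identity by a change of variables (the paper writes $\mathbf{A}=\int \mathbf{y}\mathbf{x}^t\,d\gamma$ directly and checks $\int \mathbf{A}^{-1}\mathbf{y}\,\mathbf{x}^t\,d\tilde\gamma=\mathbf{Id}$, which is just the transpose of your computation). Your explicit check that $\mathbf{A}^{-1}_\#\nu \in \mathcal{P}_2(\mathbb{R}^n)$ is a small addition the paper leaves implicit.
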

The above lemma follows from a more general result concerning pseudo duals in ~\cref{lem:pushpseudo}.

The following proposition is about the construction of approximately dual frames with arbitrary reconstruction resolution.

\begin{proposition}
    Let $\mu$ be a probabilistic frame and let $\nu \in \mathcal{P}_2(\mathbb{R}^n)$ be an approximate dual frame of $\mu$ with respect to the coupling  $\gamma \in \Gamma(\mu, \nu)$. Suppose ${\bf A}$ is the transpose of the associated mixed frame operator. Then 
     \begin{enumerate}
         \item \label{it:dual exp} The dual probabilistic frame ${{\bf A}^{-1}}_\#\nu$ of $\mu$ can be written as 
         \begin{equation*}
             {{\bf A}^{-1}}_\#\nu = ({\bf Id} + \sum_{k=1}^{\infty} ({\bf Id} - {\bf A})^k)_\# \nu.
         \end{equation*}
         \item \label{it:arbitrary resolution} For any fixed $N \in \mathbb{N}$, consider the partial sum
         \begin{equation*}
             \nu_N := \big (\sum_{k=0}^{N} ({\bf Id} - {\bf A})^k \big)_\# \nu  = \big({\bf Id} + \sum_{k=1}^{N} ({\bf Id} - {\bf A})^k \big)_\# \nu.
         \end{equation*}
         Then $ \nu_N$ is an approximately dual probabilistic frame to $\mu$ with respect to $\Tilde{\gamma}_N:=\big ({\bf Id}, {\bf Id} + \sum\limits_{k=1}^{N} ({\bf Id} - {\bf A})^k \big)_\#\gamma \in \Gamma(\mu, \nu_N)$, and 
         \begin{equation*}
          \Big \| \int_{\mathbb{R}^n \times \mathbb{R}^n} {\bf y} {\bf x}^t d\Tilde{\gamma}_N({\bf x, y}) - {\bf Id} \Big \|  \leq \| {\bf Id} -{\bf A}  \|^{N+1} \rightarrow 0 \ \text{as $N \rightarrow \infty$}.
\end{equation*}
     \end{enumerate}
\end{proposition}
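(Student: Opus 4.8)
The plan is to read both parts off the Neumann series for ${\bf A}^{-1}$. Set ${\bf B} := {\bf Id} - {\bf A}$. Since ${\bf A}$ is the transpose of the mixed frame operator $\int {\bf x}{\bf y}^t\,d\gamma$ and the operator norm is invariant under transposition, the approximate-dual condition of \cref{def:approximate dual} gives $\|{\bf B}\| = \|{\bf A} - {\bf Id}\| = \big\|\int {\bf x}{\bf y}^t\,d\gamma - {\bf Id}\big\| < 1$. Hence ${\bf A} = {\bf Id} - {\bf B}$ is invertible and
\[
{\bf A}^{-1} = \sum_{k=0}^{\infty} {\bf B}^k = {\bf Id} + \sum_{k=1}^{\infty} ({\bf Id} - {\bf A})^k,
\]
with convergence in operator norm. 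Part \eqref{it:dual exp} is then immediate: this is an equality of matrices, so pushing $\nu$ forward by the two sides yields the same measure, which is exactly the asserted expansion for ${{\bf A}^{-1}}_\#\nu$ (already known to be a dual of $\mu$ by the preceding lemma).

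For part \eqref{it:arbitrary resolution} I would write ${\bf C}_N := \sum_{k=0}^{N} {\bf B}^k = {\bf Id} + \sum_{k=1}^{N} ({\bf Id} - {\bf A})^k$, so that $\nu_N = {{\bf C}_N}_\#\nu$ and $\tilde{\gamma}_N = ({\bf Id}, {\bf C}_N)_\#\gamma$. First I would check $\tilde{\gamma}_N \in \Gamma(\mu, \nu_N)$: the map $({\bf x},{\bf y}) \mapsto ({\bf x}, {\bf C}_N{\bf y})$ fixes the first coordinate, so the first marginal remains $\mu$, while the second marginal is ${{\bf C}_N}_\#\nu = \nu_N$. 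Next, applying the change-of-variables formula for the pushforward and pulling the constant matrix ${\bf C}_N$ out of the integral,
\[
\int_{\mathbb{R}^n \times \mathbb{R}^n} {\bf y}{\bf x}^t\,d\tilde{\gamma}_N({\bf x,y}) = {\bf C}_N \int_{\mathbb{R}^n \times \mathbb{R}^n} {\bf y}{\bf x}^t\,d\gamma({\bf x,y}) = {\bf C}_N {\bf A},
\]
using ${\bf A} = \int {\bf y}{\bf x}^t\,d\gamma$. The one genuinely algebraic step is the telescoping identity
\[
{\bf C}_N {\bf A} = \Big(\sum_{k=0}^{N} {\bf B}^k\Big)({\bf Id} - {\bf B}) = {\bf Id} - {\bf B}^{N+1},
\]
from which $\big\|{\bf C}_N{\bf A} - {\bf Id}\big\| = \|{\bf B}^{N+1}\| \leq \|{\bf B}\|^{N+1} = \|{\bf Id} - {\bf A}\|^{N+1}$. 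Since $\|{\bf Id}-{\bf A}\| < 1$, this bound is both strictly below $1$ (so $\nu_N$ is genuinely an approximate dual) and tends to $0$ as $N \to \infty$, which is the claim.

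I do not expect a real obstacle here; the argument is essentially the finite and infinite geometric series. The only places needing care are bookkeeping: justifying $\|{\bf Id}-{\bf A}\|<1$ via transpose-invariance of the operator norm, tracking that the pushforward acts solely on the ${\bf y}$-coordinate so that ${\bf C}_N$ multiplies on the correct (left) side and the two marginals come out as $\mu$ and $\nu_N$, and the clean cancellation that collapses ${\bf C}_N{\bf A}$ to ${\bf Id} - {\bf B}^{N+1}$.
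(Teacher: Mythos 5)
Your proposal is correct and follows essentially the same route as the paper: the Neumann series for ${\bf A}^{-1}$ gives part (1), and the telescoping identity $\big(\sum_{k=0}^N({\bf Id}-{\bf A})^k\big){\bf A} = {\bf Id}-({\bf Id}-{\bf A})^{N+1}$ combined with submultiplicativity of the operator norm gives part (2). The extra bookkeeping you include (transpose-invariance of the norm, checking the marginals of $\tilde{\gamma}_N$) is left implicit in the paper but is correct.
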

\begin{proof}
The inverse of ${\bf A}$ can be written as a Neumann series
\begin{equation*}
    {{\bf A}^{-1}} = ({\bf Id} - ({\bf Id} - {\bf A}))^{-1} = \sum_{k=0}^{\infty} ({\bf Id} - {\bf A})^k = {\bf Id} + \sum_{k=1}^{\infty} ({\bf Id} - {\bf A})^k.
\end{equation*}
Hence, (\ref{it:dual exp}) follows. For (\ref{it:arbitrary resolution}), note that $\nu_N \in \mathcal{P}_2(\mathbb{R}^n)$ and 
\begin{equation*}
\begin{split}
    \int_{\mathbb{R}^n \times \mathbb{R}^n} {\bf y} {\bf x}^t d\Tilde{\gamma}_N({\bf x, y}) 
    &= \int_{\mathbb{R}^n \times \mathbb{R}^n} \left(\sum\limits_{k=0}^{N} ({\bf Id} - {\bf A})^k \right){\bf y} {\bf x}^t d\gamma({\bf x, y}) \\
    &= \sum\limits_{k=0}^{N} ({\bf Id} - {\bf A})^k \int_{\mathbb{R}^n \times \mathbb{R}^n} {\bf y} {\bf x}^t d\gamma({\bf x, y})= \sum\limits_{k=0}^{N} ({\bf Id} - {\bf A})^k {\bf A} \\
    &= \sum\limits_{k=0}^{N} ({\bf Id} - {\bf A})^k ({\bf Id} -({\bf Id} - {\bf A})) = {\bf Id} - ({\bf Id} -{\bf A})^{N+1}.
\end{split}
\end{equation*}
Since $\| {\bf Id} -{\bf A}  \| < 1$, we have 
 \begin{equation*}
 \begin{split}
      \norm{\int_{\mathbb{R}^n \times \mathbb{R}^n} {\bf y} {\bf x}^t d\Tilde{\gamma}_N({\bf x, y}) - {\bf Id}} &=   \norm{({\bf Id} -{\bf A})^{N+1}}  \leq \norm{{\bf Id} -{\bf A}  }^{N+1} < 1. 
 \end{split}
\end{equation*}
Thus, $ \nu_N$ is an approximately dual probabilistic frame to $\mu$ with respect to $\Tilde{\gamma}_N$, and 
 \begin{equation*}
          \Big \| \int_{\mathbb{R}^n \times \mathbb{R}^n} {\bf y} {\bf x}^t d\Tilde{\gamma}_N({\bf x, y}) - {\bf Id} \Big \|  \leq \| {\bf Id} -{\bf A}  \|^{N+1} \rightarrow 0 \ \text{as $N \rightarrow \infty$}.
\end{equation*}
\end{proof}

Since we can construct approximately dual frames with arbitrarily small error, we give another alternative definition of the approximately dual probabilistic frame. Let $\mu$ be a probabilistic frame and $0<\epsilon<1$. A probability measure $\nu \in \mathcal{P}_2(\mathbb{R}^n)$ is an \emph{$\epsilon$-approximately dual probabilistic frame} of $\mu$ if there exists $\gamma \in \Gamma(\mu, \nu)$ such that
     \begin{equation*}
     \norm{\int_{\mathbb{R}^n \times \mathbb{R}^n} {\bf x}{\bf y}^t d\gamma({\bf x, y}) - {\bf Id}}\leq \epsilon .
\end{equation*}

Similar to the situation with dual probabilistic frames (see \cref{dualChar}), we can characterize all approximately dual frames of pushforward type. We begin with the following lemma.
\begin{lemma}
    Suppose $\bf A$ is any $n \times n$ matrix such that $\|{\bf A -Id} \| <1$. Then $({\bf A}^t{\bf S}_\mu^{-1})_\#\mu$ is an approximately dual frame of $\mu$. 
\end{lemma}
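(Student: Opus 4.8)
The plan is to use the deterministic coupling induced by the map $T := \mathbf{A}^t \mathbf{S}_\mu^{-1}$, compute the resulting mixed frame operator explicitly, and check that it equals $\mathbf{A}$, so that the defining inequality of \cref{def:approximate dual} reduces to the hypothesis $\|\mathbf{A} - \mathbf{Id}\| < 1$.

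First I would set $\nu := (\mathbf{A}^t \mathbf{S}_\mu^{-1})_\#\mu$ and verify $\nu \in \mathcal{P}_2(\mathbb{R}^n)$: since $T$ is linear and bounded, $M_2(\nu) = \int_{\mathbb{R}^n} \|\mathbf{A}^t \mathbf{S}_\mu^{-1}\mathbf{x}\|^2\, d\mu(\mathbf{x}) \leq \|\mathbf{A}^t \mathbf{S}_\mu^{-1}\|^2 M_2(\mu) < \infty$, so $\nu$ is a legitimate candidate. Next I would take the coupling $\gamma := (\mathbf{Id}, T)_\#\mu$, which lies in $\Gamma(\mu, \nu)$ because its first marginal is $\mu$ and its second marginal is $T_\#\mu = \nu$.

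The heart of the argument is the computation of the mixed frame operator. Pushing the integral back to $\mu$ along the graph of $T$ gives
\begin{equation*}
    \int_{\mathbb{R}^n \times \mathbb{R}^n} \mathbf{x}\mathbf{y}^t\, d\gamma(\mathbf{x},\mathbf{y}) = \int_{\mathbb{R}^n} \mathbf{x}\,(\mathbf{A}^t \mathbf{S}_\mu^{-1}\mathbf{x})^t\, d\mu(\mathbf{x}) = \int_{\mathbb{R}^n} \mathbf{x}\mathbf{x}^t\, d\mu(\mathbf{x})\,\mathbf{S}_\mu^{-1}\mathbf{A} = \mathbf{S}_\mu \mathbf{S}_\mu^{-1}\mathbf{A} = \mathbf{A}.
\end{equation*}
Here the key step I must be careful with is transposing $\mathbf{A}^t \mathbf{S}_\mu^{-1}\mathbf{x}$ correctly: since $\mathbf{S}_\mu = \int \mathbf{y}\mathbf{y}^t\, d\mu$ is symmetric, so is $\mathbf{S}_\mu^{-1}$, and therefore $(\mathbf{A}^t \mathbf{S}_\mu^{-1}\mathbf{x})^t = \mathbf{x}^t \mathbf{S}_\mu^{-1}\mathbf{A}$, which lets the constant matrix factor out of the integral on the right.

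Finally, with the mixed frame operator equal to $\mathbf{A}$, the defining quantity becomes $\|\mathbf{A} - \mathbf{Id}\| < 1$ by hypothesis, so $\nu = (\mathbf{A}^t \mathbf{S}_\mu^{-1})_\#\mu$ is an approximately dual frame of $\mu$ with respect to $\gamma$. The argument is essentially a direct verification; the only real subtlety is the symmetry bookkeeping for $\mathbf{S}_\mu^{-1}$ in the transpose, and there is no genuine analytic obstacle once the deterministic coupling is chosen.
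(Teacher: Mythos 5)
Your proposal is correct and follows exactly the paper's argument: both use the deterministic coupling $\gamma = (\mathbf{Id}, \mathbf{A}^t\mathbf{S}_\mu^{-1})_\#\mu$ and compute the mixed frame operator to be $\mathbf{A}$, reducing the claim to the hypothesis $\|\mathbf{A}-\mathbf{Id}\|<1$. Your extra care in checking $\nu \in \mathcal{P}_2(\mathbb{R}^n)$ and in noting the symmetry of $\mathbf{S}_\mu^{-1}$ for the transpose step is sound but does not change the route.
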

\begin{proof}
    Define $\gamma:= ({\bf Id}, {\bf A}^t{\bf S}_\mu^{-1})_\#\mu \in \Gamma(\mu, ({\bf A}^t{\bf S}_\mu^{-1})_\#\mu)$, then
    \begin{equation*}
        \int_{\mathbb{R}^n \times \mathbb{R}^n} {\bf x} {\bf y}^t d\gamma({\bf x, y}) = \int_{\mathbb{R}^n } {\bf x} {\bf x}^t {\bf S}_\mu^{-1} {\bf A} d\mu({\bf x}) ={\bf S}_\mu {\bf S}_\mu^{-1} {\bf A}  ={\bf A}.
    \end{equation*}
    Since $\|{\bf A -Id} \| <1$, then $({\bf A}^t{\bf S}_\mu^{-1})_\#\mu$ is an approximately dual frame to $\mu$. 
\end{proof}

 We characterize all approximately dual frames of pushforward type as follows.
 
\begin{corollary}\label{PushforwardCharacter}
    Let $T_\# \mu$ be an approximately dual frame to $\mu$ where $T: \mathbb{R}^n \rightarrow \mathbb{R}^n$ is a measurable map. Then for any $\ {\bf x} \in \mathbb{R}^n$, $T: \mathbb{R}^n \rightarrow \mathbb{R}^n$ precisely satisfies 
    \begin{equation*}
        T({\bf x}) = {\bf A}^t{\bf S}_\mu^{-1}{\bf x} + h({\bf x}),
    \end{equation*}
    where $\bf A$ is an $n \times n$ matrix satisfying $\|{\bf A -Id} \| <1$ and $h: \mathbb{R}^n \rightarrow \mathbb{R}^n$ is such that $h_\#\mu \in \mathcal{P}_2(\mathbb{R}^n)$ with $ \int_{\mathbb{R}^n } {\bf x} h({\bf x})^t d\mu({\bf x}) = {\bf 0}_{n \times n}$.
\end{corollary}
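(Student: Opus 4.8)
The plan is to imitate the proof of \cref{dualChar} almost verbatim, replacing the identity mixed frame operator there by a general matrix ${\bf A}$ with $\|{\bf A} - {\bf Id}\| < 1$, and to read ``$T_\#\mu$ is an approximately dual frame'' with respect to the natural coupling $\gamma := ({\bf Id}, T)_\#\mu \in \Gamma(\mu, T_\#\mu)$, for which the mixed frame operator collapses to $\int_{\mathbb{R}^n} {\bf x}\, T({\bf x})^t\, d\mu({\bf x})$. This is the same convention already used in the converse of \cref{dualChar}, so it is consistent with the rest of the paper.

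For the ``if'' direction I would take $T({\bf x}) = {\bf A}^t {\bf S}_\mu^{-1}{\bf x} + h({\bf x})$ with the stated hypotheses on ${\bf A}$ and $h$, and compute the mixed frame operator directly. Using that ${\bf S}_\mu$ (hence ${\bf S}_\mu^{-1}$) is symmetric, the linear part contributes $\bigl(\int {\bf x}{\bf x}^t d\mu\bigr){\bf S}_\mu^{-1}{\bf A} = {\bf S}_\mu {\bf S}_\mu^{-1}{\bf A} = {\bf A}$, exactly as in the lemma immediately preceding this corollary, while the hypothesis $\int {\bf x} h({\bf x})^t d\mu = {\bf 0}_{n\times n}$ annihilates the remaining term. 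Thus the mixed frame operator equals ${\bf A}$, and $\|{\bf A} - {\bf Id}\| < 1$ gives that $T_\#\mu$ is an approximate dual.

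For the ``only if'' direction I would set ${\bf A} := \int {\bf x}\, T({\bf x})^t\, d\mu({\bf x})$, a well-defined finite matrix since $\mu, T_\#\mu \in \mathcal{P}_2(\mathbb{R}^n)$, and satisfying $\|{\bf A} - {\bf Id}\| < 1$ by the approximate-dual hypothesis (so in particular ${\bf A}$ is invertible). Defining $h({\bf x}) := T({\bf x}) - {\bf A}^t {\bf S}_\mu^{-1}{\bf x}$ gives the claimed decomposition by construction, and the moment identity follows by running the forward computation backwards: $\int {\bf x} h({\bf x})^t d\mu = {\bf A} - {\bf S}_\mu {\bf S}_\mu^{-1}{\bf A} = {\bf 0}_{n\times n}$.

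The one step needing genuine (if modest) care — and the closest thing to an obstacle — is checking $h_\#\mu \in \mathcal{P}_2(\mathbb{R}^n)$. Here I would observe that $h$ is the difference of two maps whose pushforwards of $\mu$ have finite second moment: the map $T$ by hypothesis, and the linear map ${\bf x} \mapsto {\bf A}^t {\bf S}_\mu^{-1}{\bf x}$, since a bounded linear image of a measure in $\mathcal{P}_2(\mathbb{R}^n)$ again lies in $\mathcal{P}_2(\mathbb{R}^n)$. A triangle inequality in $L^2(\mu, \mathbb{R}^n)$ then bounds $M_2(h_\#\mu)$ by $\bigl(\sqrt{M_2(T_\#\mu)} + \|{\bf A}^t {\bf S}_\mu^{-1}\|\sqrt{M_2(\mu)}\bigr)^2 < \infty$, which finishes the argument.
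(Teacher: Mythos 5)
Your proposal is correct and follows essentially the same route as the paper's proof: the forward direction computes the mixed frame operator of the coupling $({\bf Id},T)_\#\mu$ to get ${\bf A}$, and the converse sets ${\bf A}:=\int_{\mathbb{R}^n}{\bf x}\,T({\bf x})^t\,d\mu({\bf x})$ and $h:=T-{\bf A}^t{\bf S}_\mu^{-1}$. The only difference is that you spell out the verification that $h_\#\mu\in\mathcal{P}_2(\mathbb{R}^n)$, which the paper simply asserts.
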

This may be viewed as a corollary of a result concerning pseudo-dual probabilistic frames.  See~\cref{cor:psuedodualpush} for the proof.

Using a similar argument as in the proof, we give another characterization.
\begin{corollary}
    Let $T_\# \mu$ be an approximately dual frame to $\mu$ where $T: \mathbb{R}^n \rightarrow \mathbb{R}^n$ is measurable. Then for all $\ {\bf x} \in \mathbb{R}^n$ the map $T: \mathbb{R}^n \rightarrow \mathbb{R}^n$ precisely satisfies  
    \begin{equation*}
        T({\bf x}) = {\bf A}^t{\bf S}_\mu^{-1}({\bf x}) + h({\bf x}) - \int_{\mathbb{R}^n} \langle {\bf S}_\mu^{-1}{\bf x}, {\bf y} \rangle h({\bf y}) d\mu({\bf y}),
      \end{equation*}
where $\bf A$ is an $n \times n$ matrix satisfying $\|{\bf A -Id} \| <1$ and $h: \mathbb{R}^n \rightarrow \mathbb{R}^n$ is such that $h_\#\mu \in \mathcal{P}_2(\mathbb{R}^n)$.
\end{corollary}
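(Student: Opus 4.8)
The plan is to mirror the proof of \cref{PushforwardCharacter}, the only genuinely new ingredient being the extra correction term. Both implications reduce to a single computation of the mixed frame operator $\int_{\mathbb{R}^n}{\bf x}\,T({\bf x})^t\,d\mu({\bf x})$ attached to the coupling $({\bf Id},T)_\#\mu\in\Gamma(\mu,T_\#\mu)$. The governing observation, which I would record first, is that the correction term depends on $h$ only through its moment matrix ${\bf M}_h:=\int_{\mathbb{R}^n}{\bf y}\,h({\bf y})^t\,d\mu({\bf y})$: using that ${\bf S}_\mu$ (and hence ${\bf S}_\mu^{-1}$) is symmetric, one has $\int_{\mathbb{R}^n}\langle{\bf S}_\mu^{-1}{\bf x},{\bf y}\rangle\,h({\bf y})\,d\mu({\bf y}) = {\bf M}_h^t{\bf S}_\mu^{-1}{\bf x}$, so the correction term is itself linear in ${\bf x}$.

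For the forward direction I would assume $T$ has the stated form and compute $\int_{\mathbb{R}^n}{\bf x}\,T({\bf x})^t\,d\mu({\bf x})$ summand by summand, repeatedly using $\int_{\mathbb{R}^n}{\bf x}{\bf x}^t\,d\mu={\bf S}_\mu$. The first summand ${\bf A}^t{\bf S}_\mu^{-1}{\bf x}$ contributes ${\bf S}_\mu{\bf S}_\mu^{-1}{\bf A}={\bf A}$; the summand $h$ contributes ${\bf M}_h$; and, via the identity above, the correction term contributes $-{\bf S}_\mu{\bf S}_\mu^{-1}{\bf M}_h=-{\bf M}_h$. The last two cancel, so the mixed frame operator equals ${\bf A}$, and since $\|{\bf A}-{\bf Id}\|<1$ this shows $T_\#\mu$ is an approximate dual. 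The point, in contrast to \cref{PushforwardCharacter}, is that this cancellation holds for \emph{arbitrary} $h$ with $h_\#\mu\in\mathcal{P}_2(\mathbb{R}^n)$, with no zero-moment hypothesis imposed.

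For the converse I would set ${\bf A}:=\int_{\mathbb{R}^n}{\bf x}\,T({\bf x})^t\,d\mu({\bf x})$, which satisfies $\|{\bf A}-{\bf Id}\|<1$ by the approximate-dual hypothesis, and define $h({\bf x}):=T({\bf x})-{\bf A}^t{\bf S}_\mu^{-1}{\bf x}$. A short estimate $\|h({\bf x})\|^2\le 2\|T({\bf x})\|^2+2\|{\bf A}^t{\bf S}_\mu^{-1}\|^2\|{\bf x}\|^2$ gives $h_\#\mu\in\mathcal{P}_2(\mathbb{R}^n)$. This particular $h$ is moment-free, since $\int_{\mathbb{R}^n}{\bf x}\,h({\bf x})^t\,d\mu = {\bf A}-{\bf A} = {\bf 0}_{n\times n}$, so ${\bf M}_h={\bf 0}_{n\times n}$ and the correction term vanishes identically; substituting back yields ${\bf A}^t{\bf S}_\mu^{-1}{\bf x}+h({\bf x})-{\bf 0}=T({\bf x})$, exhibiting $T$ in the required form.

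I do not expect any substantive obstacle: once the identity $\int_{\mathbb{R}^n}\langle{\bf S}_\mu^{-1}{\bf x},{\bf y}\rangle h({\bf y})\,d\mu({\bf y})={\bf M}_h^t{\bf S}_\mu^{-1}{\bf x}$ is in hand, the forward direction is a routine moment computation and the converse is the same bookkeeping run in reverse. The only mild subtlety worth flagging is keeping the transposes and the symmetry of ${\bf S}_\mu^{-1}$ straight, and noticing that the role of the correction term is precisely to project out ${\bf M}_h$, which is what lets one drop the zero-moment constraint present in \cref{PushforwardCharacter}.
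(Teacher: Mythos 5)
Your proposal is correct and is exactly the argument the paper intends: the paper omits the proof of this corollary, deferring to ``a similar argument'' to that of \cref{PushforwardCharacter}, and your computation — using the graph coupling $({\bf Id},T)_\#\mu$, observing that the correction term contributes $-{\bf M}_h$ and cancels the contribution of $h$ so that the mixed frame operator is ${\bf A}$ for arbitrary $h$, and in the converse choosing $h({\bf x})=T({\bf x})-{\bf A}^t{\bf S}_\mu^{-1}{\bf x}$ so that the correction term vanishes — is precisely that adaptation. No gaps.
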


Note that not every approximately dual frame is of pushforward type, since one can find a dual frame that is not of the pushforward type.  As we've already seen, $\mu = \frac{1}{2}\delta_{\frac{1}{2}} + \frac{1}{2}\delta_{\frac{3}{2}}$ is a dual frame of $\delta_1$ under the coupling $\gamma = (\bI, T)_\#\mu \in \Gamma(\mu, \delta_1)$ where $T(x) = 1, x \in \mathbb{R}$ even though there does not exist a map $M: \mathbb{R} \rightarrow \mathbb{R}$ such that $\mu = M_\# \delta_1 = \delta_{M(1)}$. We finish this section with the following corollary. 

\begin{corollary}
        Let $\mu$ be a probabilistic frame and $\bf A$ an $n \times n$ matrix such that $\|{\bf A -Id} \| <1$.  Suppose $T_\# \mu$ is a dual probabilistic frame to $\mu$ where $T: \mathbb{R}^n \rightarrow \mathbb{R}^n$ is measurable. Then $({\bf A}^t{\bf S}_\mu^{-1}-{\bf Id}+{\bf S}_\mu T)_\# \mu$ is an approximately dual frame of $\mu$.
\end{corollary}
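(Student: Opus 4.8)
The plan is to compute the mixed frame operator of $\mu$ and the candidate measure directly via the natural pushforward coupling, observe that it equals $\mathbf{A}$, and then read off the conclusion from the hypothesis $\|\mathbf{A} - \mathbf{Id}\| < 1$. Write $\widetilde{T} := \mathbf{A}^t \mathbf{S}_\mu^{-1} - \mathbf{Id} + \mathbf{S}_\mu T$ for the map defining the candidate, so that $\widetilde{T}(\mathbf{x}) = \mathbf{A}^t \mathbf{S}_\mu^{-1} \mathbf{x} - \mathbf{x} + \mathbf{S}_\mu T(\mathbf{x})$, and set $\gamma := (\mathbf{Id}, \widetilde{T})_\#\mu \in \Gamma(\mu, \widetilde{T}_\#\mu)$.

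First I would confirm $\widetilde{T}_\#\mu \in \mathcal{P}_2(\mathbb{R}^n)$, so that the object is a legitimate candidate. Since $\widetilde{T}(\mathbf{x}) = (\mathbf{A}^t \mathbf{S}_\mu^{-1} - \mathbf{Id})\mathbf{x} + \mathbf{S}_\mu T(\mathbf{x})$, the first summand is a fixed linear image of $\mathbf{x}$ and hence square-$\mu$-integrable because $\mu \in \mathcal{P}_2(\mathbb{R}^n)$, while $\int_{\mathbb{R}^n} \|\mathbf{S}_\mu T(\mathbf{x})\|^2 \, d\mu(\mathbf{x}) \leq \|\mathbf{S}_\mu\|^2 M_2(T_\#\mu) < \infty$ because $T_\#\mu \in \mathcal{P}_2(\mathbb{R}^n)$; the triangle inequality in $L^2$ then gives finite second moment.

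The core computation is the mixed frame operator $\int_{\mathbb{R}^n \times \mathbb{R}^n} \mathbf{x}\mathbf{y}^t \, d\gamma = \int_{\mathbb{R}^n} \mathbf{x}\,\widetilde{T}(\mathbf{x})^t \, d\mu(\mathbf{x})$. Transposing $\widetilde{T}(\mathbf{x})$ and using that $\mathbf{S}_\mu$ and $\mathbf{S}_\mu^{-1}$ are symmetric, I would split the integral into three pieces: $\bigl(\int \mathbf{x}\mathbf{x}^t \, d\mu\bigr)\mathbf{S}_\mu^{-1}\mathbf{A} = \mathbf{S}_\mu \mathbf{S}_\mu^{-1}\mathbf{A} = \mathbf{A}$, then $-\int \mathbf{x}\mathbf{x}^t \, d\mu = -\mathbf{S}_\mu$, and finally $\bigl(\int \mathbf{x}\, T(\mathbf{x})^t \, d\mu\bigr)\mathbf{S}_\mu$. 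For the last piece I would invoke that $T_\#\mu$ is a dual frame of $\mu$ realized through its pushforward coupling $(\mathbf{Id}, T)_\#\mu$ — exactly the reading used in the proof of \cref{dualChar} — which gives $\int \mathbf{x}\, T(\mathbf{x})^t \, d\mu = \mathbf{Id}$, so the third piece equals $\mathbf{S}_\mu$. Summing yields $\mathbf{A} - \mathbf{S}_\mu + \mathbf{S}_\mu = \mathbf{A}$.

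Since the mixed frame operator equals $\mathbf{A}$ and $\|\mathbf{A} - \mathbf{Id}\| < 1$ by hypothesis, \cref{def:approximate dual} shows $\widetilde{T}_\#\mu$ is an approximately dual frame of $\mu$. There is no genuine obstacle here beyond careful bookkeeping with transposes and the symmetry of $\mathbf{S}_\mu$; the only points needing attention are the finite-second-moment check and reading off $\int \mathbf{x}\, T(\mathbf{x})^t \, d\mu = \mathbf{Id}$ from the dual hypothesis under the standard coupling. In fact, this corollary is a special case of \cref{PushforwardCharacter}: setting $h(\mathbf{x}) := \mathbf{S}_\mu T(\mathbf{x}) - \mathbf{x}$, one has $\widetilde{T}(\mathbf{x}) = \mathbf{A}^t \mathbf{S}_\mu^{-1}\mathbf{x} + h(\mathbf{x})$ with $\int \mathbf{x}\, h(\mathbf{x})^t \, d\mu = \mathbf{Id}\,\mathbf{S}_\mu - \mathbf{S}_\mu = \mathbf{0}_{n\times n}$, so one could alternatively cite \cref{PushforwardCharacter} directly rather than recomputing.
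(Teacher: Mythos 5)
Your proposal is correct and follows essentially the same route as the paper: the paper's proof sets $h(\mathbf{x}) = -\mathbf{x} + \mathbf{S}_\mu T(\mathbf{x})$, verifies $\int \mathbf{x}\,h(\mathbf{x})^t\,d\mu = \mathbf{0}_{n\times n}$ using $\int \mathbf{x}\,T(\mathbf{x})^t\,d\mu = \mathbf{Id}$, and cites \cref{PushforwardCharacter} --- exactly the shortcut you note in your final paragraph, and your direct computation of the mixed frame operator is just that lemma's argument unrolled. Your explicit second-moment check and your flagging of the implicit assumption that the duality of $T_\#\mu$ is realized by the pushforward coupling $(\mathbf{Id},T)_\#\mu$ are both reasonable additions; the paper makes the same reading silently.
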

\begin{proof}
Since $T_\# \mu$ is a dual probabilistic frame to $\mu$, then $\int_{\mathbb{R}^n } {\bf x} T({\bf x})^t d\mu({\bf x}) = {\bf Id}$.
By letting $h({\bf x}) = (-{\bf Id}+{\bf S}_\mu T){\bf x}$, then $h_\#\mu \in \mathcal{P}_2(\mathbb{R}^n)$ and 
     \begin{equation*}
        \int_{\mathbb{R}^n } {\bf x} h({\bf x})^t d \mu({\bf x}) = - \int_{\mathbb{R}^n } {\bf x} {\bf x}^t d\mu({\bf x}) + \int_{\mathbb{R}^n } {\bf x} T({\bf x})^t d\mu({\bf x})\cdot {\bf S}_\mu = {\bf 0}_{n \times n}.
    \end{equation*}
   By \cref{PushforwardCharacter}, $({\bf A}^t{\bf S}_\mu^{-1}-{\bf Id}+{\bf S}_\mu T)_\# \mu$ is an approximate dual frame of $\mu$. 
\end{proof}

\section{Approximate Duals of Perturbed Probabilistic Frames }\label{peturbation}
In this section, we consider the approximately dual frames of perturbed probabilistic frames. We claim that if a probability measure is close to one of the probabilistic frames in some dual pair, then the other probabilistic frame in the dual pair is an approximate dual to this probability measure, which is explained in \cref{pertubedFrame}, \cref{corollary:DualFrameOpenness}, and the following diagram.
\begin{center}
  \begin{tikzcd}[row sep =40 pt, column sep=huge]
  \mu \arrow[leftrightarrow, r, "\text{dual \ pair}"]  & \nu \arrow[leftrightarrow, ld, "\text{approximate \ dual}"] \\
 \eta \arrow[u,leftrightarrow, "\text{close}" ] 
 \end{tikzcd}
\end{center}

Note that the space of probabilistic frames is open in the $2$-Wasserstein topology, meaning that small perturbations of probabilistic frames are still probabilistic frames. More precisely, we have the following proposition.

\begin{proposition}[{\cite[Proposition 1.2, Corollary 1.3]{chen2023paley}}]\label{prop:quadratic closeness}
    Let $\mu$ be a probabilistic frame with lower frame bound $A > 0$. If there exist $\eta \in \mathcal{P}_2(\mathbb{R}^n)$  and $\gamma \in \Gamma(\eta, \mu) $ so that 
    \[
       \lambda:=  \int_{\mathbb{R}^n \times \mathbb{R}^n} \| {\bf x}- {\bf y} \|^2 d\gamma({\bf x},{\bf y})  < A,
    \]
then $\eta$ is  a probabilistic frame with bounds $(\sqrt{A}-\sqrt{\lambda})^2 $ and $M_2(\eta)$. Furthermore, if $W_2(\mu, \eta) < \sqrt{A}$,
then $\eta$ is a probabilistic frame with bounds $(\sqrt{A}-W_2(\mu, \nu))^2 $ and $M_2(\eta)$.
\end{proposition}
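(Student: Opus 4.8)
The plan is to obtain the upper bound for free and to transfer the lower frame bound of $\mu$ across the coupling $\gamma$. Since $\eta \in \mathcal{P}_2(\mathbb{R}^n)$, it is automatically Bessel with bound $M_2(\eta)$, as noted immediately after the definition of a probabilistic frame, so only the lower bound $(\sqrt{A}-\sqrt{\lambda})^2$ requires work. Fix ${\bf f} \in \mathbb{R}^n$ and view the two linear functionals ${\bf x} \mapsto \langle {\bf x}, {\bf f}\rangle$ and ${\bf y} \mapsto \langle {\bf y}, {\bf f}\rangle$ as genuine elements of $L^2(\gamma)$. Because $\gamma \in \Gamma(\eta, \mu)$ has first marginal $\eta$ and second marginal $\mu$, pushing forward gives
\[
\int_{\mathbb{R}^n} \abs{\langle {\bf x}, {\bf f}\rangle}^2 \, d\eta({\bf x}) = \int_{\mathbb{R}^n \times \mathbb{R}^n} \abs{\langle {\bf x}, {\bf f}\rangle}^2 \, d\gamma, \qquad \int_{\mathbb{R}^n} \abs{\langle {\bf y}, {\bf f}\rangle}^2 \, d\mu({\bf y}) = \int_{\mathbb{R}^n \times \mathbb{R}^n} \abs{\langle {\bf y}, {\bf f}\rangle}^2 \, d\gamma.
\]

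The key step is the reverse triangle (Minkowski) inequality in $L^2(\gamma)$ applied to these two functions, which yields
\[
\left(\int \abs{\langle {\bf x}, {\bf f}\rangle}^2 d\gamma\right)^{1/2} \geq \left(\int \abs{\langle {\bf y}, {\bf f}\rangle}^2 d\gamma\right)^{1/2} - \left(\int \abs{\langle {\bf x} - {\bf y}, {\bf f}\rangle}^2 d\gamma\right)^{1/2}.
\]
I would then estimate the two terms on the right. The first is at least $\sqrt{A}\,\norm{{\bf f}}$ by the lower frame bound of $\mu$. For the second I would apply Cauchy--Schwarz pointwise, $\abs{\langle {\bf x}-{\bf y}, {\bf f}\rangle}^2 \leq \norm{{\bf x}-{\bf y}}^2 \norm{{\bf f}}^2$, and then the definition of $\lambda$, bounding it by $\sqrt{\lambda}\,\norm{{\bf f}}$. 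Combining these, and using $\lambda < A$ so that $\sqrt{A}-\sqrt{\lambda} > 0$, I square to obtain $\int \abs{\langle {\bf x}, {\bf f}\rangle}^2 d\eta \geq (\sqrt{A}-\sqrt{\lambda})^2 \norm{{\bf f}}^2$, exactly the claimed lower frame bound.

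For the ``furthermore'' clause I would simply invoke attainment of the infimum in optimal transport: since $\mu, \eta \in \mathcal{P}_2(\mathbb{R}^n)$, the infimum defining $W_2^2(\mu, \eta)$ is achieved by some $\gamma \in \Gamma(\eta, \mu)$, for which $\lambda = W_2^2(\mu, \eta) < A$ by hypothesis. Feeding this optimal $\gamma$ into the first part produces the lower bound $(\sqrt{A}-W_2(\mu,\eta))^2$. I do not expect a serious obstacle here; the only point needing care is the reverse triangle inequality step, whose validity rests on recognizing the two functionals as honest $L^2(\gamma)$ functions so that Minkowski applies verbatim, after which every remaining estimate is direct.
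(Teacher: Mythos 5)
The paper does not actually prove this proposition---it is imported verbatim from \cite{chen2023paley}---so there is no in-paper argument to compare against. Your proof is correct and is the standard argument one would expect: viewing ${\bf x}\mapsto\langle {\bf x},{\bf f}\rangle$ and ${\bf y}\mapsto\langle {\bf y},{\bf f}\rangle$ as elements of $L^2(\gamma)$, applying the reverse triangle inequality, using the marginal conditions together with pointwise Cauchy--Schwarz, and squaring (legitimate since $\sqrt{A}-\sqrt{\lambda}>0$); the ``furthermore'' clause follows from attainment of the optimal coupling, as you say.
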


Our first result in this section is motivated by the study of topology on the set of approximately dual frames, which is generally not weakly closed in $\mathcal{P}_2(\mathbb{R}^n)$. However, we claim that given an approximate dual pair $\mu$ and $\nu$, if a probability measure $\eta$ is close to $\mu$ in an appropriate sense, then $\eta$ is an approximate dual frame of $\nu$, as depicted in the following diagram.   
\begin{center}
  \begin{tikzcd}[column sep=80pt,row sep=40 pt]
  \mu \arrow[leftrightarrow, r, "\text{approximate \ dual}"]  & \nu \arrow[leftrightarrow, ld, "\text{approximate \ dual}"] \\
 \eta \arrow[u, " \text{``close"} " ] 
 \end{tikzcd}
\end{center}

\begin{proposition}\label{proposition:ApproDualPertubation}
    Let $\mu$ be a probabilistic frame with lower bound $A>0$ and $\nu$ an approximately dual frame of $\mu$ with mixed frame operator $ {\bf A}$.  If the upper frame bound of $\nu$  is $C>0$ 
 and there exist $\eta \in \mathcal{P}_2(\mathbb{R}^n)$ and $\gamma \in \Gamma(\eta, \mu)$ such that 
$$\int_{\mathbb{R}^n \times \mathbb{R}^n} \| {\bf x}-{\bf A^{-1} y} \|^2 d\gamma({\bf x, y}) <  \frac{1}{C},$$
then $\eta $ is a probabilistic frame and $\nu$ is an approximately dual frame to $\eta$.
\end{proposition}
\begin{proof}
    Since $\nu$ is an approximately dual frame to $\mu$ with mixed frame operator $ {\bf A}$, then there exists $\pi \in \Gamma(\mu, \nu)$ such that 
    \begin{equation*}
        {\bf A} = \int_{\mathbb{R}^n \times \mathbb{R}^n} {\bf y}{\bf z}^t d\pi({\bf y, z})
    \end{equation*}
and $\|{\bf A - Id}  \|<1$. Therefore, ${\bf A }$ is invertible. By \cref{gluinglemma} (Gluing Lemma), there exists $\Tilde{\pi} \in \mathcal{P}(\mathbb{R}^n \times \mathbb{R}^n \times \mathbb{R}^n)$ such that $\gamma = {\pi_{xy}}_{\#}{\Tilde{\pi}} $ and $\pi = {\pi_{yz}}_{\#}{\Tilde{\pi}} $. Now define $\Tilde{\gamma} = {\pi_{xz}}_{\#}{\Tilde{\pi}} \in \Gamma(\eta, \nu)$.
Then, for any ${\bf f} \in \mathbb{R}^n$, 
 \begin{equation*}
 \begin{split}
     \norm{\int_{\mathbb{R}^n \times \mathbb{R}^n} {\bf x} \langle {\bf z, f} \rangle d\Tilde{\gamma}({\bf x, z}) - {\bf f}}^2
     &= \norm{\int_{\mathbb{R}^n \times \mathbb{R}^n} {\bf x} \langle {\bf z, f} \rangle d\Tilde{\gamma}({\bf x, z}) - {\bf A^{-1} A f} }^2 \\
      &=  \norm{ \int_{\mathbb{R}^n \times \mathbb{R}^n} {\bf x} \langle {\bf z, f} \rangle  d\Tilde{\gamma}({\bf x, z}) - \int_{\mathbb{R}^n \times \mathbb{R}^n} {\bf A^{-1}} {\bf y} \langle {\bf z, f} \rangle   d\pi({\bf y, z})}^2 \\
      &= \norm{ \int_{\mathbb{R}^n \times \mathbb{R}^n \times \mathbb{R}^n} ({\bf x} - {\bf A^{-1}} {\bf y}) \langle {\bf z, f} \rangle   d\Tilde{\pi}({\bf x, y, z}) }^2 \\
      & \leq \left | \int_{\mathbb{R}^n \times \mathbb{R}^n \times \mathbb{R}^n}  \| {\bf x}-{\bf A^{-1} y} \| \,  | \langle {\bf z, f} \rangle | \,d\Tilde{\pi}({\bf x, y, z}) \right |^2 \\
      & \leq  \int_{\mathbb{R}^n \times \mathbb{R}^n} \| {\bf x}-{\bf A^{-1} y} \|^2  d\gamma({\bf x, y}) \int_{\mathbb{R}^n}  | \langle {\bf z, f} \rangle |^2   d\nu({\bf z})   \\
      & \leq  \left(\int_{\mathbb{R}^n \times \mathbb{R}^n} \| {\bf x}-{\bf A^{-1} y} \|^2  d\gamma({\bf x, y})\right) C \|{\bf  f} \|^2  < \Vert  {\bf f} \Vert^2,
 \end{split}
    \end{equation*}
where the second inequality is due to the Cauchy–Schwarz inequality. Therefore,
\begin{equation*}
    \norm{\int_{\mathbb{R}^n \times \mathbb{R}^n} {\bf x} {\bf z}^t d\Tilde{\gamma}({\bf x, z}) - {\bf Id} } < 1.
\end{equation*}
Thus, $\nu $ is an approximately dual frame to $\eta$ with respect to $\Tilde{\gamma} \in \Gamma(\eta, \nu)$. \cref{DualIsFrame} then implies that $\eta$ is also a probabilistic frame. 
\end{proof}

Letting $\nu$ be a dual frame of $\mu$ (i.e., $\bf A = Id$) immediately implies the following.
\begin{corollary}\label{pertubedFrame}
Let $\mu$ be a probabilistic frame with lower frame bound $A>0$. Suppose $\nu$ is a dual probabilistic frame to $\mu$ with upper frame bound $C>0$ with $AC\leq 1$.  If there exist $\eta \in \mathcal{P}_2(\mathbb{R}^n)$  and $\gamma \in \Gamma(\eta, \mu) $ such that
    \begin{equation*}
        \int_{\mathbb{R}^n \times \mathbb{R}^n} \| {\bf x}- {\bf y} \|^2 d\gamma({\bf x},{\bf y})  < A,
    \end{equation*}
 then $\nu$ is an approximately dual probabilistic frame to $\eta$.
\end{corollary}

If $\gamma \in \Gamma(\eta, \mu)$ in the above corollary is an optimal transport coupling for $W_2(\eta, \mu)$, then we have the following corollary.
\begin{corollary}\label{corollary:DualFrameOpenness}
    Let $\mu$ be a probabilistic frame with lower bound $A>0$ and  $\nu$ a dual frame to $\mu$ with upper bound $C>0$ so that $AC\leq 1$.  If there exist $\eta \in \mathcal{P}_2(\mathbb{R}^n)$ such that $W_2(\eta, \mu)  < \sqrt{A},$
 then $\nu$ is an approximately dual frame to $\eta$. Consequently, for a fixed probabilistic frame, its dual probabilistic frames are interior points in the set of approximately dual frames in the $2$-Wasserstein topology.
\end{corollary}

One might be concerned by the condition that the lower frame bound $A$ and the upper frame bound $C$ for the dual frame need to satisfy $AC \leq 1$. After all, maybe there are no such dual frames or, even if there are, maybe they are effectively impossible to find. Fortunately, the canonical dual frame satisfies this condition.   

\begin{corollary}
    Let $\mu$ be a probabilistic frame with lower frame bound $A>0$. Suppose there exist $\eta \in \mathcal{P}_2(\mathbb{R}^n)$  and $\gamma \in \Gamma(\eta, \mu) $ such that
    \begin{equation*}
        \int_{\mathbb{R}^n \times \mathbb{R}^n} \| {\bf x}- {\bf y} \|^2 d\gamma({\bf x},{\bf y})  < A.
    \end{equation*}
 Then ${{\bf S}_{\mu}^{-1}}_{\#}\mu$ is an approximately dual frame to $\eta$.
\end{corollary}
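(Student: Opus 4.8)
The plan is to apply \cref{pertubedFrame} with the dual frame $\nu$ taken to be the canonical dual ${{\bf S}_\mu^{-1}}_\#\mu$, and then to invoke the symmetry of the approximately-dual relationship. The content of the corollary is precisely that the canonical dual automatically satisfies the numerical constraint $AC \le 1$ appearing in the hypothesis of \cref{pertubedFrame}, so that no separate search for an admissible dual is required; this is the point foreshadowed by the remark preceding the statement.

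First I would recall that $\nu := {{\bf S}_\mu^{-1}}_\#\mu$ is a dual probabilistic frame of $\mu$ (as recorded after \cref{dualDefinition}) with frame bounds $1/B$ and $1/A$, where $A, B$ here denote optimal bounds for $\mu$; this follows since ${\bf S}_\nu = {\bf S}_\mu^{-1}$, whose eigenvalues are the reciprocals of those of ${\bf S}_\mu$. In particular the upper frame bound of $\nu$ is $C := 1/A$. Since the $A$ appearing in the corollary is a lower frame bound for $\mu$, we may take it to be the optimal one, giving $AC = A \cdot (1/A) = 1 \le 1$. More generally, if $A$ is merely a valid lower bound, then $A \le A_\mu$ for the optimal lower bound $A_\mu$, and using the optimal upper bound $C = 1/A_\mu$ of $\nu$ still yields $AC = A/A_\mu \le 1$, so the hypothesis $AC \le 1$ always holds for the canonical dual.

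With $\nu$ the canonical dual, all hypotheses of \cref{pertubedFrame} are in place: $\nu$ is a dual of $\mu$ whose upper bound $C$ satisfies $AC \le 1$, and the perturbation bound $\int_{\mathbb{R}^n \times \mathbb{R}^n} \|{\bf x} - {\bf y}\|^2 \, d\gamma < A$ is exactly the hypothesis of the corollary. \cref{pertubedFrame} then gives that $\eta$ is an approximately dual probabilistic frame to $\nu = {{\bf S}_\mu^{-1}}_\#\mu$.

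It remains only to exchange the roles of $\eta$ and $\nu$. By the symmetry of the approximate-dual relation noted after \cref{def:approximate dual} (reversing a coupling $\gamma \in \Gamma(\eta,\nu)$ transposes the mixed frame operator, which leaves the defining inequality $\|\,\cdot\, - {\bf Id}\| < 1$ unchanged since the operator norm is invariant under transposition), we conclude that $\nu$ is an approximate dual to $\eta$; that is, ${{\bf S}_\mu^{-1}}_\#\mu$ is an approximately dual frame to $\eta$, as claimed. I do not anticipate a genuine obstacle here: the only step requiring care is the bound bookkeeping for the canonical dual, namely confirming that its upper frame bound is the reciprocal of the lower frame bound of $\mu$, which is exactly what forces $AC \le 1$.
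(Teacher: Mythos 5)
Your proposal is correct and follows the paper's own argument: instantiate \cref{pertubedFrame} with $\nu = {{\bf S}_\mu^{-1}}_\#\mu$, whose upper frame bound $C = 1/A$ gives $AC \le 1$ automatically. The extra care you take with non-optimal bounds and with the symmetry of the approximate-dual relation is fine but not a different route.
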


\begin{proof}
${{\bf S}_{\mu}^{-1}}_{\#}\mu$ is a dual probabilistic frame of $\mu$ with upper frame bound $\frac{1}{A}$. Letting $\nu = {{\bf S}_{\mu}^{-1}}_{\#}\mu$, we can apply \cref{pertubedFrame} since $C=\frac{1}{A}$ and $AC=1$, so the result follows. 
\end{proof}

In fact, the condition in \cref{pertubedFrame} that $A$ is the lower frame bound of $\mu$ is not necessary: we used it to guarantee that $\eta$ and $\mu$ are close enough that we can invoke \cref{prop:quadratic closeness} and conclude $\eta$ is a probabilistic frame. But we can use \Cref{DualIsFrame} to see that $\eta$ is a probabilistic frame instead. 

More precisely, if we have 
    \begin{equation*}
        \int_{\mathbb{R}^n \times \mathbb{R}^n} \| {\bf x}- {\bf y} \|^2 d\gamma({\bf x},{\bf y})  < A  \enskip\text{or} \enskip  W_2(\eta, \mu)  < \sqrt{A}
    \end{equation*}
and $AC \leq 1$, then the proof of \cref{proposition:ApproDualPertubation} for $\bf A =Id$ shows that
\begin{equation*}
    \Big \| \int_{\mathbb{R}^n \times \mathbb{R}^n} {\bf x} {\bf z}^t d\Tilde{\gamma}({\bf x, z}) - {\bf Id} \Big \| < \sqrt{AC} \leq 1, 
\end{equation*}
which implies that $\nu$ and $\eta$ form an approximate dual pair. And then \cref{DualIsFrame} tells us that $\eta$ is also a probabilistic frame.

Therefore, we can construct discrete approximately dual frames for arbitrary probabilistic frames as follows. Suppose $\eta$ is a probabilistic frame and let $\mu = \frac{1}{N}\sum_{k=1}^N \delta_{{\bf f}_k}$ be a discrete approximation of $\eta$. Since Dirac delta measures are dense in $\mathcal{P}_2(\mathbb{R}^n)$, we can find such a $\mu =\frac{1}{N}\sum\limits_{k=1}^N \delta_{{\bf f}_k}$ so that $W_2(\eta, \mu) < \sqrt{A}_N$ for any error bound $\sqrt{A}_N > 0$. 
Now, if $\nu= \frac{1}{N}\sum\limits_{k=1}^N \delta_{{\bf g}_k}$ is a dual frame to $\mu$ with upper frame bound $C_N$ so that $A_NC_N \leq 1$, then the above reasoning shows that $\nu$ is an approximately dual frame to the original probabilistic frame $\eta$. We record this observation in the following diagram and theorem.

\begin{center}
  \begin{tikzcd}[row sep =huge, column sep=huge]
  \mu = \frac{1}{N}\sum\limits_{k=1}^N \delta_{{\bf f}_k} \arrow[leftrightarrow, r, "\text{dual \ pair}"]  & \nu= \frac{1}{N}\sum\limits_{k=1}^N \delta_{{\bf g}_k} \arrow[leftrightarrow, ld, "\text{approximate \ dual}"] \\
 \eta \arrow[u, "\text{sampling}" ] 
 \end{tikzcd}
\end{center}

\begin{theorem}\label{thm:discrete approx dual}
Given $0<A_N< \infty$, let $\eta \in \mathcal{P}_2(\mathbb{R}^n)$ and let $\mu =\frac{1}{N} \sum_{k=1}^N \delta_{{\bf f}_k}$ be a probabilistic frame such that  
    \begin{equation*}
        W_2(\eta, \mu)  < \sqrt{A_N}.
    \end{equation*}
    Suppose $\nu= \frac{1}{N}\sum_{k=1}^N \delta_{{\bf g}_k}$ is a dual probabilistic frame to $\mu$ with upper bound $C_N$ such that $A_N C_N\leq 1$.  
 Then, $\eta$ is a probabilistic frame and $\nu=\frac{1}{N}\sum_{k=1}^N \delta_{{\bf g}_k}$ is an approximately dual frame of $\eta$.
\end{theorem}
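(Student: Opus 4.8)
The plan is to follow the argument of \cref{pertubedFrame} almost verbatim, with a single substitution: because $A_N$ is here merely a positive constant rather than the lower frame bound of $\mu$, I cannot invoke \cref{prop:quadratic closeness} to establish that $\eta$ is a probabilistic frame. Instead I will first show directly that $\nu$ is an approximately dual frame to $\eta$, and then deduce that $\eta$ is a probabilistic frame from \cref{DualIsFrame}. This is exactly the trade-off flagged in the discussion preceding the theorem.

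First I would fix an optimal coupling $\gamma \in \Gamma(\eta,\mu)$ for $W_2(\eta,\mu)$. Since optimal couplings exist in $\mathcal{P}_2(\mathbb{R}^n)$ and $W_2(\eta,\mu) < \sqrt{A_N}$, this $\gamma$ satisfies $\int_{\mathbb{R}^n \times \mathbb{R}^n} \|{\bf x}-{\bf y}\|^2 \, d\gamma({\bf x,y}) = W_2^2(\eta,\mu) < A_N$. Because $\nu$ is a dual probabilistic frame to $\mu$, there is $\gamma' \in \Gamma(\mu,\nu)$ with $\int {\bf y z}^t \, d\gamma' = {\bf Id}$. These two couplings share the marginal $\mu$, so the Gluing Lemma (\cref{gluinglemma}) produces $\tilde{\pi} \in \mathcal{P}(\mathbb{R}^n \times \mathbb{R}^n \times \mathbb{R}^n)$ with ${\pi_{xy}}_\# \tilde{\pi} = \gamma$ and ${\pi_{yz}}_\# \tilde{\pi} = \gamma'$; setting $\tilde{\gamma} := {\pi_{xz}}_\# \tilde{\pi} \in \Gamma(\eta,\nu)$ supplies the candidate coupling between $\eta$ and $\nu$.

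The heart of the proof is the estimate on the mixed frame operator of $\eta$ and $\nu$ under $\tilde{\gamma}$, which I carry out exactly as in \cref{pertubedFrame}. For arbitrary ${\bf f} \in \mathbb{R}^n$, I rewrite $\int {\bf x}\langle {\bf z,f}\rangle \, d\tilde{\gamma} - {\bf f}$ as $\int ({\bf x-y})\langle {\bf z,f}\rangle \, d\tilde{\pi}$, using that $\int {\bf y}\langle {\bf z,f}\rangle \, d\gamma' = {\bf f}$ follows from the duality relation $\int {\bf y z}^t \, d\gamma' = {\bf Id}$. Applying Cauchy--Schwarz together with the transport-cost bound $\int \|{\bf x-y}\|^2 \, d\gamma < A_N$ and the upper frame bound $\int |\langle {\bf z,f}\rangle|^2 \, d\nu \leq C_N \|{\bf f}\|^2$ of $\nu$ yields $\big\|\int {\bf x z}^t \, d\tilde{\gamma} - {\bf Id}\big\| < \sqrt{A_N C_N} \leq 1$. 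Hence $\nu$ is an approximately dual frame to $\eta$, and, by the symmetry of the approximate-dual relation noted after \cref{def:approximate dual}, $\eta$ is an approximate dual of $\nu$.

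Finally, because $\nu$ is itself a probabilistic frame (being a dual probabilistic frame to $\mu$, with finite upper bound $C_N$) and $\eta$ is one of its approximate duals, \cref{DualIsFrame} guarantees that $\eta$ is a probabilistic frame, completing the argument. I do not anticipate any genuine obstacle: the only points requiring care are that the marginals line up so that $\tilde{\gamma}$ really lies in $\Gamma(\eta,\nu)$, and that $A_N$ functions purely as a closeness parameter rather than as a frame bound of $\mu$---which is precisely why \cref{DualIsFrame} must replace \cref{prop:quadratic closeness} in the final step.
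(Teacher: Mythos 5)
Your proposal is correct and follows essentially the same route as the paper: run the gluing-lemma estimate from \cref{pertubedFrame} with the optimal coupling for $W_2(\eta,\mu)$ to get $\bigl\|\int {\bf x}{\bf z}^t \, d\tilde{\gamma} - {\bf Id}\bigr\| < \sqrt{A_N C_N} \leq 1$, then replace the appeal to \cref{prop:quadratic closeness} with \cref{DualIsFrame} to conclude $\eta$ is a probabilistic frame. This is exactly the substitution the paper itself makes in the discussion preceding the theorem.
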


Indeed, such $\nu$ always exists. To see this, note that we can pick $\mu$ and $A_N$ such that $W_2(\eta, \mu) < \sqrt{A_N} \leq \frac{\sqrt{A_\eta}}{2}$ where $A_\eta>0$ is the lower frame bound of $\eta$, and hence \cref{prop:quadratic closeness} implies that $\mu$ is a probabilistic frame with lower bound $(W_2(\eta, \mu) - \sqrt{A_\eta})^2$. Now let $\nu= {{\bf S}_{\mu}^{-1}}_\# \mu = \frac{1}{N}\sum\limits_{k=1}^N \delta_{{\bf S}_{\mu}^{-1}{\bf f}_k}$ be the canonical dual to $\mu$. Then the upper frame bound for $\nu$ is given by $$C_N = \frac{1}{(W_2(\eta, \mu) - \sqrt{A_\eta})^2} < \frac{1}{(\sqrt{A_\eta}/2)^2} = \frac{1}{A_\eta/4}$$ and $$A_NC_N = \frac{A_N}{(W_2(\eta, \mu) - \sqrt{A_\eta})^2} < \frac{A_\eta/4}{A_\eta/4} = 1,$$ so $\nu$ satisfies the hypotheses of \cref{thm:discrete approx dual}.

By \cref{thm:discrete approx dual}, absolutely continuous probabilistic frames have discrete approximately dual frames. Conversely, discrete probabilistic frames can also have approximate dual frames that are absolutely continuous. For example,  the standard Gaussian measure $d\mu(x) = \frac{1}{\sqrt{2\pi}} e^{-\frac{(x-1)^2}{2}}dx$ is a dual probabilistic frame of $\delta_1$ with respect to the product measure $\mu \otimes \delta_1$. Thus, $\mu$ is also an approximately dual frame to $\delta_1$. This is another example showing that not all the dual frames and approximately dual frames to $\delta_1$ are of pushforward type. 

The above results are inspired by \cref{prop:quadratic closeness}. We can obtain other results if probabilistic frames are perturbed in different ways.

\begin{lemma}
Let $\mu$ be a probabilistic frame and $\nu$ a dual probabilistic frame of $\mu$ with second moment $M_2(\nu)$. Suppose $C>0$, $M_2(\nu)  C < 1$, and there exist $\eta \in \mathcal{P}_2(\mathbb{R}^n)$ and $\gamma \in \Gamma(\eta, \mu)$ such that  for any ${\bf f} \in \mathbb{R}^n$,
    \begin{equation*}
  \int_{\mathbb{R}^n \times \mathbb{R}^n}  |\left\langle {\bf f}, {\bf x-y} \right\rangle|^2 d\gamma({\bf x, y}) \leq C    \Vert  {\bf f} \Vert^2. 
    \end{equation*}
Then $\eta$ is a probabilistic frame and $\nu$ is an approximately dual frame to $\eta$. 
\end{lemma}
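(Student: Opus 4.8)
The plan is to adapt the proof of \cref{pertubedFrame}, replacing its aggregate closeness condition $\int \|{\bf x}-{\bf y}\|^2\,d\gamma < A$ with the present directional hypothesis on $\pi$, and to route the key estimate through the bilinear form of the mixed frame operator rather than a single vector-valued Cauchy--Schwarz. First, since $\nu$ is a dual probabilistic frame of $\mu$, there is a coupling $\gamma' \in \Gamma(\mu, \nu)$ with $\int {\bf y}{\bf z}^t\,d\gamma' = {\bf Id}$. Because $\pi \in \Gamma(\eta, \mu)$ and $\gamma' \in \Gamma(\mu, \nu)$ share the marginal $\mu$, I would apply \cref{gluinglemma} to obtain $\tilde\pi \in \mathcal{P}(\mathbb{R}^n \times \mathbb{R}^n \times \mathbb{R}^n)$ with ${\pi_{xy}}_{\#}\tilde\pi = \pi$ and ${\pi_{yz}}_{\#}\tilde\pi = \gamma'$, and then set $\tilde\gamma := {\pi_{xz}}_{\#}\tilde\pi \in \Gamma(\eta, \nu)$ as the candidate coupling witnessing that $\nu$ is an approximate dual of $\eta$.

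The core of the argument is to bound $\|{\bf M} - {\bf Id}\|$ for the mixed frame operator ${\bf M} := \int {\bf x}{\bf z}^t\,d\tilde\gamma$. I would evaluate its bilinear form directly: for arbitrary ${\bf f}, {\bf g} \in \mathbb{R}^n$, using $\langle {\bf f}, {\bf g}\rangle = \langle (\int {\bf y}{\bf z}^t\,d\gamma')\,{\bf f}, {\bf g}\rangle$ and expressing both $\langle {\bf M}{\bf f}, {\bf g}\rangle$ and $\langle {\bf f},{\bf g}\rangle$ as integrals against $\tilde\pi$, one obtains $\langle ({\bf M}-{\bf Id}){\bf f}, {\bf g}\rangle = \int \langle {\bf x}-{\bf y}, {\bf g}\rangle \langle {\bf z}, {\bf f}\rangle\,d\tilde\pi$. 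Cauchy--Schwarz on the product space, followed by projection of the two factors onto their marginals $\pi$ and $\nu$, then gives
\begin{equation*}
|\langle ({\bf M}-{\bf Id}){\bf f}, {\bf g}\rangle| \leq \left(\int |\langle {\bf g}, {\bf x}-{\bf y}\rangle|^2\,d\pi\right)^{1/2}\left(\int |\langle {\bf f}, {\bf z}\rangle|^2\,d\nu\right)^{1/2} \leq \sqrt{C\,M_2(\nu)}\,\|{\bf g}\|\,\|{\bf f}\|,
\end{equation*}
where the first factor is controlled by the directional hypothesis and the second by the Bessel bound $M_2(\nu)$ of $\nu$. Taking the supremum over unit ${\bf f}$ and ${\bf g}$ yields $\|{\bf M}-{\bf Id}\| \leq \sqrt{C\,M_2(\nu)} < 1$, so $\nu$ is an approximate dual of $\eta$.

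I expect the main subtlety to be precisely why the bilinear-form computation, rather than the vector-valued triangle inequality of \cref{pertubedFrame}, is the right tool here: the hypothesis bounds $\int |\langle {\bf f}, {\bf x}-{\bf y}\rangle|^2\,d\pi$ only one direction at a time, so testing against the auxiliary vector ${\bf g}$ is what lets me use it without incurring the dimensional factor $n$ that bounding $\int \|{\bf x}-{\bf y}\|^2\,d\pi$ would cost. Finally, to see that $\eta$ is itself a probabilistic frame, I would invoke the symmetry of the approximate-dual relation together with the fact that $\nu$, being a dual probabilistic frame of $\mu$, is a probabilistic frame; \cref{DualIsFrame}, applied with $\nu$ as the base frame and $\eta$ as its approximate dual, then shows that $\eta$ is a probabilistic frame.
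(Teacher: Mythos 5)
Your proof is correct and follows essentially the same route as the paper: glue $\pi$ and the dual coupling via \cref{gluinglemma}, pass to $\tilde\gamma \in \Gamma(\eta,\nu)$, extract the bound $\sqrt{C\,M_2(\nu)}<1$ on the mixed frame operator minus the identity via Cauchy--Schwarz, and invoke \cref{DualIsFrame} to conclude that $\eta$ is a probabilistic frame. The only (immaterial) difference is that the paper bounds $\bigl\|\int {\bf z}\,\langle {\bf x}-{\bf y},{\bf f}\rangle\,d\tilde\pi\bigr\|$ directly by the triangle inequality for vector-valued integrals --- keeping ${\bf f}$ paired with ${\bf x}-{\bf y}$ so the directional hypothesis still applies, contrary to your worry about a dimensional factor --- whereas you test the bilinear form against a second vector ${\bf g}$; both give the same constant.
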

\begin{proof}
Since $\nu$ is a dual probabilistic frame to $\mu$, there exists $\pi \in \Gamma(\mu, \nu)$ such that
     \begin{equation*}
      \int_{\mathbb{R}^n \times \mathbb{R}^n} {\bf z}{\bf y}^t d\pi({\bf y, z}) = {\bf Id}.
    \end{equation*}
By \cref{gluinglemma} (Gluing Lemma), there exists $\Tilde{\pi} \in \mathcal{P}(\mathbb{R}^n \times \mathbb{R}^n \times \mathbb{R}^n)$ such that ${\pi_{xy}}_{\#}{\Tilde{\pi}} = \gamma$ and ${\pi_{yz}}_{\#}{\Tilde{\pi}}=\pi$. Now let $\Tilde{\gamma} = {\pi_{xz}}_{\#}{\Tilde{\pi}} \in \Gamma(\eta, \nu)$.
Then, for any ${\bf f} \in \mathbb{R}^n$, 
 \begin{equation*}
 \begin{split}
     \norm{ \int_{\mathbb{R}^n \times \mathbb{R}^n} {\bf z} \langle {\bf x}, {\bf f} \rangle d\Tilde{\gamma}({\bf x, z}) - {\bf f}}^2
      &=  \norm{ \int_{\mathbb{R}^n \times \mathbb{R}^n} {\bf z} \langle {\bf x}, {\bf f} \rangle d\Tilde{\gamma}({\bf x, z}) - \int_{\mathbb{R}^n \times \mathbb{R}^n} {\bf z}\langle {\bf y}, {\bf f} \rangle  d\pi({\bf y, z}) }^2 \\
      &= \norm{\int_{\mathbb{R}^n \times \mathbb{R}^n \times \mathbb{R}^n} {\bf z} \langle {\bf x-y}, {\bf f} \rangle  d\Tilde{\pi}({\bf x, y, z}) }^2 \\
      & \leq \left | \int_{\mathbb{R}^n \times \mathbb{R}^n \times \mathbb{R}^n}  \| {\bf z} \| \ |\langle {\bf x-y}, {\bf f} \rangle |  d\Tilde{\pi}({\bf x, y, z}) \right |^2 \\
      & \leq  \int_{\mathbb{R}^n} \| {\bf z} \|^2   d\nu({\bf z})   \int_{\mathbb{R}^n \times \mathbb{R}^n} |\langle {\bf x-y}, {\bf f} \rangle |^2  d\gamma({\bf x, y}) \\
      & \leq M_2(\nu)  C  \ \Vert  {\bf f} \Vert^2,
 \end{split}
    \end{equation*}
where the second inequality follows from the Cauchy–Schwarz inequality. Then
\begin{equation*}
    \norm{ \int_{\mathbb{R}^n \times \mathbb{R}^n} {\bf x} {\bf z}^t d\Tilde{\gamma}({\bf x, z}) - {\bf Id} } = \norm{\int_{\mathbb{R}^n \times \mathbb{R}^n} {\bf z} {\bf x}^t d\Tilde{\gamma}({\bf x, z}) - {\bf Id} }\leq \sqrt{M_2(\nu)  C } < 1 .
\end{equation*}
Hence, $\nu$ is an approximately dual frame to $\eta$. Therefore, $\eta$ is also a probabilistic frame by \cref{DualIsFrame}. 
\end{proof}

The following diagram shows that given an approximate dual pair $\mu$ and $\nu$, if a probability measure $\eta$ is close to $\mu$, then there exists an approximate dual frame $\xi$ of $\eta$ such that they have the same mixed frame operator as $\mu$ and $\nu$.  

 \begin{center}
  \begin{tikzcd}[column sep=100pt,row sep=50pt]
  \mu \arrow[leftrightarrow, r, "\text{approximate \ dual}"] & \nu \arrow[leftrightarrow, d, "\text{same \ mixed \ frame \ operator}"] \\
 \eta \arrow[u, "\text{close}" ]  \arrow[leftrightarrow, r, "\text{approximate \ dual}"]  & \xi 
 \end{tikzcd}
\end{center}

\begin{lemma}
Let $\mu$ be a probabilistic frame with lower bound $A>0$ and $\nu$ an approximately dual frame to $\mu$.  If there exist $\eta \in \mathcal{P}_2(\mathbb{R}^n)$  and $\gamma \in \Gamma(\mu, \eta) $ such that
    \begin{equation*}
        \int_{\mathbb{R}^n \times \mathbb{R}^n} \| {\bf x}- {\bf w} \|^2 d\gamma({\bf x},{\bf w})  < A,
    \end{equation*}
 then there exists $\xi \in \mathcal{P}_2(\mathbb{R}^n)$  such that $\xi$ and $\eta$ form an approximately dual pair, and they have the same mixed frame operator as $\mu$ and $\nu$.
\end{lemma}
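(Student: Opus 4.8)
The plan is to avoid transporting $\nu$ across the couplings and instead manufacture $\xi$ directly from $\eta$'s own frame operator, so that the prescribed mixed frame operator is attained exactly. Write $\mathbf{M} := \int_{\mathbb{R}^n\times\mathbb{R}^n}\mathbf{x}\mathbf{y}^t\,d\gamma'(\mathbf{x},\mathbf{y})$ for the mixed frame operator of the approximate dual pair $(\mu,\nu)$, where $\gamma'\in\Gamma(\mu,\nu)$ is a witnessing coupling; by \Cref{def:approximate dual} we have $\|\mathbf{M}-\mathbf{Id}\|<1$. This matrix is the only datum we will need from $\nu$.

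First I would verify that $\eta$ is itself a probabilistic frame. The hypothesis supplies $\gamma\in\Gamma(\mu,\eta)$ with $\int\|\mathbf{x}-\mathbf{w}\|^2\,d\gamma<A$; swapping coordinates yields a coupling in $\Gamma(\eta,\mu)$ with the same quadratic cost, so \Cref{prop:quadratic closeness} shows $\eta$ is a probabilistic frame. In particular, by \Cref{TAcharacterization} its frame operator $\mathbf{S}_\eta$ is positive definite, hence invertible.

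Then I would simply set $\xi := (\mathbf{M}^t\mathbf{S}_\eta^{-1})_\#\eta$. Because $\|\mathbf{M}-\mathbf{Id}\|<1$ and $\eta$ is a probabilistic frame, the lemma immediately preceding \Cref{PushforwardCharacter} (equivalently, the case $h=0$ of that characterization, applied with base measure $\eta$ and matrix $\mathbf{A}=\mathbf{M}$) shows that $\xi$ is an approximate dual of $\eta$ via the coupling $(\mathbf{Id},\mathbf{M}^t\mathbf{S}_\eta^{-1})_\#\eta$, and that the resulting mixed frame operator equals
\[
\int_{\mathbb{R}^n}\mathbf{w}\,(\mathbf{M}^t\mathbf{S}_\eta^{-1}\mathbf{w})^t\,d\eta(\mathbf{w}) = \mathbf{S}_\eta\mathbf{S}_\eta^{-1}\mathbf{M} = \mathbf{M}.
\]
By the symmetry of optimal transport, $\eta$ and $\xi$ then form an approximate dual pair, and their mixed frame operator is exactly $\mathbf{M}$, the mixed frame operator of $(\mu,\nu)$; this is the assertion.

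I do not expect a genuine analytic obstacle here. The closeness hypothesis does no work beyond guaranteeing, through \Cref{prop:quadratic closeness}, that $\eta$ is a frame and hence that $\mathbf{S}_\eta^{-1}$ exists. The one conceptual step worth stressing is that one should resist the natural temptation to glue $\gamma$ with $\gamma'$: that route produces a partner of $\eta$ whose mixed frame operator is only \emph{close} to $\mathbf{M}$ (with an error of order $\sqrt{A\,M_2(\nu)}$ that need not be small and could even break the $\|\cdot-\mathbf{Id}\|<1$ bound), whereas building $\xi$ out of $\eta$ itself pins the target operator $\mathbf{M}$ down on the nose.
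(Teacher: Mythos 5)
Your proposal is correct and is essentially identical to the paper's own proof: both invoke \cref{prop:quadratic closeness} to get invertibility of $\mathbf{S}_\eta$ and then define $\xi$ as the pushforward of $\eta$ under $\mathbf{A}^t\mathbf{S}_\eta^{-1}$ (your $\mathbf{M}$ is the paper's $\mathbf{A}$), citing the $h=\mathbf{0}$ case of \cref{PushforwardCharacter}. Your closing remark about why gluing the couplings would be the wrong move is a nice observation, but the core argument matches the paper's.
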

\begin{proof}
By \cref{prop:quadratic closeness}, we know that $\eta$ is a probabilistic frame and thus the frame operator ${\bf S}_\eta$ of $\eta$ is invertible. Since $\mu$ and $\nu$ form an approximate dual pair, then there exists $\gamma' \in \Gamma(\mu, \nu)$ such that 
\begin{equation*}
        {\bf A} := \int_{\mathbb{R}^n \times \mathbb{R}^n} {\bf x} {\bf y}^t d\gamma'({\bf x},{\bf y})
    \end{equation*}
and $\|{\bf A} -{\bf Id}\| <1$.  Now define $T:\mathbb{R}^n \rightarrow \mathbb{R}^n$ by $T({\bf x}) = {\bf A}^t{\bf S}_\eta^{-1}({\bf x})$.
Then, by \cref{PushforwardCharacter} (taking $h({\bf x}) = {\bf 0}$), we know $\xi:=T_\#\eta$ is an approximately dual frame to $\eta$ with respect to $\pi:=({\bf Id}, T)_\#\eta \in \Gamma(\eta, \xi)$. Furthermore, 
$$\int_{\mathbb{R}^n \times \mathbb{R}^n} {\bf w}{\bf z}^t d\pi({\bf w, z}) = \int_{\mathbb{R}^n} {\bf w}T({\bf w})^t d\eta({\bf w}) = {\bf A} = \int_{\mathbb{R}^n \times \mathbb{R}^n} {\bf x} {\bf y}^t d\gamma'({\bf x},{\bf y}).$$
That is to say,  the approximate dual pair $\xi$ and $\eta$ have the same mixed frame operator as $\mu$ and $\nu$.
\end{proof}

If $\gamma \in \Gamma(\mu, \eta) $ is an optimal coupling with respect to $W_2(\mu, \eta)$, we have the following corollary. 
\begin{corollary}
    Let $\mu$ be a probabilistic frame with lower bound $A>0$ and $\nu$ an approximately dual frame to $\mu$.  If there exists $\eta \in \mathcal{P}_2(\mathbb{R}^n)$ such that $W_2(\mu, \eta)  < \sqrt{A}$, then there exists $\xi \in \mathcal{P}_2(\mathbb{R}^n)$  such that $\xi$ and $\eta$ form an approximately dual pair and they have the same mixed frame operator as $\mu$ and $\nu$. 
\end{corollary}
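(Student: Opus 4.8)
The plan is to specialize the immediately preceding lemma to the case where the coupling $\gamma$ is chosen to be an optimal transport plan for $W_2(\mu,\eta)$. The content of this corollary is purely that the Wasserstein-distance hypothesis $W_2(\mu,\eta) < \sqrt{A}$ can be converted into the integral hypothesis required by the lemma, so the proof is a short reduction rather than a fresh argument.

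First I would record the standard existence fact from optimal transport: since $\mu,\eta \in \mathcal{P}_2(\mathbb{R}^n)$ and the quadratic cost $\|{\bf x}-{\bf w}\|^2$ is nonnegative and lower semicontinuous, the infimum defining $W_2^2(\mu,\eta)$ is attained by some optimal coupling $\gamma \in \Gamma(\mu,\eta)$ (see \cite{figalli2021invitation}). For this $\gamma$ we then have
\begin{equation*}
    \int_{\mathbb{R}^n \times \mathbb{R}^n} \|{\bf x}-{\bf w}\|^2 \, d\gamma({\bf x},{\bf w}) = W_2^2(\mu,\eta) < A,
\end{equation*}
where the final strict inequality is exactly the hypothesis $W_2(\mu,\eta) < \sqrt{A}$ squared.

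Next I would feed this optimal $\gamma$ into the preceding lemma. Its hypotheses are met verbatim — $\mu$ is a probabilistic frame with lower bound $A>0$, $\nu$ is an approximately dual frame to $\mu$, and we have exhibited $\eta \in \mathcal{P}_2(\mathbb{R}^n)$ together with $\gamma \in \Gamma(\mu,\eta)$ satisfying the integral bound strictly less than $A$. The lemma therefore yields a $\xi \in \mathcal{P}_2(\mathbb{R}^n)$ such that $\xi$ and $\eta$ form an approximately dual pair with the same mixed frame operator as $\mu$ and $\nu$, which is precisely the assertion of the corollary.

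I do not expect any genuine obstacle here, since the result is a direct specialization. The only point requiring care is the appeal to the attainment of the infimum in the definition of $W_2$; if one prefers to avoid invoking existence of an optimal coupling, an alternative is to select a near-optimal $\gamma \in \Gamma(\mu,\eta)$ with $\int \|{\bf x}-{\bf w}\|^2 \, d\gamma < A$, which exists by definition of the infimum whenever $W_2^2(\mu,\eta) < A$, and then apply the lemma to that $\gamma$ instead. Either route gives the conclusion immediately.
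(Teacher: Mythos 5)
Your proposal is correct and is exactly the argument the paper intends: the paper introduces the corollary with the remark that one takes $\gamma$ to be an optimal coupling for $W_2(\mu,\eta)$, so that the integral hypothesis of the preceding lemma becomes $W_2^2(\mu,\eta) < A$, and no further proof is given. Your fallback via a near-optimal coupling is a harmless extra precaution but not needed.
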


\section{Pseudo-Dual Probabilistic Frames}\label{pseudoDual}

In this section, we generalize dual probabilistic frames even further, to pseudo-dual probabilistic frames. Given $\mu, \nu \in \mathcal{P}_2(\mathbb{R}^n)$ and $\gamma \in \Gamma(\mu, \nu)$, call the matrix  
$$\int_{\mathbb{R}^n \times \mathbb{R}^n} {\bf x}{\bf y}^t d\gamma({\bf x, y})$$
the \emph{mixed frame operator} of $\mu$ and $\nu$. This generalizes the terminology of \Cref{def:approximate dual} slightly, in that neither $\mu$ nor $\nu$ is required to be a probabilistic frame.

\begin{definition}
     Let $\mu$ be a probabilistic frame. A probability measure $\nu \in \mathcal{P}_2(\mathbb{R}^n)$ is called a \emph{pseudo-dual probabilistic frame} of $\mu$ if there exists $\gamma \in \Gamma(\mu, \nu)$ such that the associated mixed frame operator is invertible. 
\end{definition}

Clearly, dual probabilistic frames and approximately dual frames are pseudo-duals. Note that for finite frames $\{{\bf f}_i\}_{i=1}^N$ and $\{{\bf g}_i\}_{i=1}^N$ in $\mathbb{R}^n$ where $N\geq n$, their mixed operator $\sum_{i=1}^N {\bf f}_i {\bf g}_i^t$ is not always invertible. For example, when $d=2$ and $N=4$, define frames
$${\bf f}_1 = \begin{pmatrix} 1 \\ 0 \end{pmatrix}, \quad {\bf f}_2 = \begin{pmatrix} 0 \\ 1 \end{pmatrix}, \quad {\bf f}_3 = \begin{pmatrix} 1 \\ 0 \end{pmatrix}, \quad {\bf f}_4 = \begin{pmatrix} 0 \\ 1 \end{pmatrix}$$
and
$${\bf g}_1 = \begin{pmatrix} 1 \\ 0 \end{pmatrix}, \quad {\bf g}_2 = \begin{pmatrix} 1 \\ 0 \end{pmatrix}, \quad {\bf g}_3 = \begin{pmatrix} 0 \\ 1 \end{pmatrix}, \quad {\bf g}_4 = \begin{pmatrix} 0 \\ 1 \end{pmatrix}.$$
The mixed operator between $\{{\bf f}_i\}_{i=1}^4$ and $\{{\bf g}_i\}_{i=1}^4$ is given by
$\sum\limits_{i=1}^4 {\bf f}_i {\bf g}_i^t = \begin{pmatrix} 1 & 1 \\ 1 & 1 \end{pmatrix}$, which is not invertible. This example shows that a frame and a permutation of that frame need not be pseudo-duals of each other.

Furthermore, arbitrary non-atomic probabilistic frames are not necessarily pseudo-duals. Let $\mu$ be any probabilistic frame on $\R$ with zero mean and non-zero second moment, like the standard Gaussian measure. Then the mixed frame operator of $\mu$ with itself regarding the product measure $\mu \otimes \mu$ is not invertible, since
$$\int_{\mathbb{R} \times \mathbb{R}} xy d\mu(x) d\mu(y) = \left(\int_{\mathbb{R}} x d\mu(x)\right)^2 =0.$$

Similarly to \cref{DualIsFrame}, we claim that a pseudo-dual probabilistic frame is a probabilistic frame. We also obtain an uncertainty principle and a similar relationship between optimal frame bounds for a probabilistic frame and its pseudo-duals. The proof is omitted since the argument is the same as in \cref{DualIsFrame}.

\begin{lemma}
    Let $\mu$ be a probabilistic frame with upper frame bound $B>0$.  Suppose $\nu \in \mathcal{P}_2(\mathbb{R}^n)$ is a pseudo-dual probabilistic frame of $\mu$ with mixed frame operator $\bf A$, then $\nu$ is a probabilistic frame with bounds $ 1/(B \| {\bf A}^{-1}\|^2)$ and $M_2(\nu)$. 
    
In addition, for any ${\bf f} \in \mathbb{R}^n$, we have the following uncertainty principle:
     \begin{equation*}
        \int_{\mathbb{R}^n} |\langle {\bf A }^{-1} {\bf x}, {\bf f} \rangle| ^2 d\mu({\bf x}) \int_{\mathbb{R}^n} |\langle {\bf y}, {\bf f} \rangle| ^2 d\nu({\bf y})  \geq \| {\bf f} \|^4,
    \end{equation*}
and equality is attained for all ${\bf f} \in \mathbb{R}^n$  if and only if  ${\bf S}_{\mu}=c{\bf A}{\bf A}^{t}$ and
$\nu=(\frac{1}{c}{\bf A}^{-1})_{\#}\mu$ where $c$ is a nonzero constant.
Furthermore, if $A_\mu$, $B_\mu$, and $A_\nu$, $B_\nu$ are optimal frame bounds for $\mu$ and $\nu$, then 
\begin{equation*}
    A_\nu \geq \frac{1}{B_\mu \| {\bf A}^{-1}\|^2} \ \text{and} \  A_\mu \geq \frac{1}{B_\nu \| {\bf A}^{-1}\|^2}.
\end{equation*}
\end{lemma}

\begin{lemma}\label{lem:pushpseudo}
Let $\nu \in \mathcal{P}_2(\mathbb{R}^n)$ be a pseudo-dual of the probabilistic frame $\mu$. If the matrix ${\bf A} \in \mathbb{R}^{n \times n}$ is invertible,  then ${\bf A}_\# \nu$ is also a pseudo-dual to $\mu$. If ${\bf A}$ is  the mixed frame operator of $\mu$ and $\nu$, then ${({\bf A}^{-1})^t}_\# \nu$ is a dual probabilistic frame to $\mu$. 
\end{lemma}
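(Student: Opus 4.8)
The plan is to derive both assertions from a single observation: pushing forward the second marginal of a coupling by an invertible linear map transforms the associated mixed frame operator by right-multiplication by the transpose of that map, and this operation preserves invertibility. Since $\nu$ is a pseudo-dual of $\mu$, there is a coupling $\gamma \in \Gamma(\mu, \nu)$ whose mixed frame operator ${\bf M} := \int_{\mathbb{R}^n \times \mathbb{R}^n} {\bf x}{\bf y}^t \, d\gamma({\bf x}, {\bf y})$ is invertible, and this $\gamma$ will be reused for both parts.

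For the first assertion, given an invertible ${\bf A}$ I would define the map $\Phi({\bf x}, {\bf y}) = ({\bf x}, {\bf A}{\bf y})$ and set $\gamma' := \Phi_\# \gamma$. First I would verify that ${\bf A}_\# \nu \in \mathcal{P}_2(\mathbb{R}^n)$, since $M_2({\bf A}_\# \nu) = \int_{\mathbb{R}^n} \|{\bf A}{\bf y}\|^2 \, d\nu({\bf y}) \leq \|{\bf A}\|^2 M_2(\nu) < \infty$, and that $\gamma' \in \Gamma(\mu, {\bf A}_\# \nu)$, which is immediate from the form of $\Phi$ and the marginals of $\gamma$. The key computation is then
\[
\int_{\mathbb{R}^n \times \mathbb{R}^n} {\bf x}{\bf z}^t \, d\gamma'({\bf x}, {\bf z}) = \int_{\mathbb{R}^n \times \mathbb{R}^n} {\bf x}({\bf A}{\bf y})^t \, d\gamma({\bf x}, {\bf y}) = \left(\int_{\mathbb{R}^n \times \mathbb{R}^n} {\bf x}{\bf y}^t \, d\gamma({\bf x}, {\bf y})\right){\bf A}^t = {\bf M}{\bf A}^t,
\]
which is invertible as a product of invertible matrices. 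Hence ${\bf A}_\# \nu$ is a pseudo-dual of $\mu$.

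For the second assertion, ${\bf A}$ is itself the mixed frame operator of some $\gamma \in \Gamma(\mu, \nu)$, and it is invertible because $\nu$ is a pseudo-dual. Applying the construction above with the invertible matrix ${\bf B} := ({\bf A}^{-1})^t$ in place of the arbitrary invertible matrix, the pushforward coupling $\gamma' := \Phi_\# \gamma$ with $\Phi({\bf x}, {\bf y}) = ({\bf x}, {\bf B}{\bf y})$ lies in $\Gamma(\mu, {\bf B}_\# \nu)$ and has mixed frame operator
\[
{\bf A}{\bf B}^t = {\bf A}\big(({\bf A}^{-1})^t\big)^t = {\bf A}{\bf A}^{-1} = {\bf Id}.
\]
By \cref{dualDefinition}, this exhibits ${({\bf A}^{-1})^t}_\# \nu$ as a dual probabilistic frame of $\mu$. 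The computation is elementary and there is no deep obstacle; the only points requiring care are the bookkeeping ones—confirming the pushforward coupling has the correct marginals and that the second moment stays finite—together with tracking the transpose so that right-multiplication by ${\bf A}^t$ (rather than ${\bf A}$) appears in the transformed operator. This transpose is precisely what forces the choice ${\bf B} = ({\bf A}^{-1})^t$ (rather than ${\bf A}^{-1}$) in order to produce the identity in the second assertion.
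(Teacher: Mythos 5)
Your proof is correct and follows essentially the same route as the paper: push the second coordinate of the coupling $\gamma$ forward by the linear map, observe that the mixed frame operator gets right-multiplied by the transpose, and specialize to $({\bf A}^{-1})^t$ to recover the identity. The only cosmetic difference is that you explicitly verify the second-moment and marginal bookkeeping, which the paper leaves implicit.
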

    
\begin{proof}
  Clearly, ${\bf A}_\# \nu \in \mathcal{P}_2(\mathbb{R}^n)$. Since $\nu$ is a pseudo-dual frame of $\mu$, then there exists $\gamma \in \Gamma(\mu, \nu)$ such that the mixed frame operator $\int_{\mathbb{R}^n \times \mathbb{R}^n} {\bf x}{\bf y}^t d\gamma({\bf x, y})$ is invertible. Now define $\Tilde{\gamma} := ({\bf Id}, {\bf A})_\# \gamma \in \Gamma(\mu, {\bf A}_\# \nu)$. Then,
    $$\int_{\mathbb{R}^n \times \mathbb{R}^n} {\bf x}{\bf y}^t d\Tilde{\gamma}({\bf x, y}) = \left(\int_{\mathbb{R}^n \times \mathbb{R}^n} {\bf x}{\bf y}^t d\gamma({\bf x, y}) \right) {\bf A}^t,$$
    which is also invertible. Therefore, ${\bf A}_\# \nu$ is a pseudo-dual probabilistic frame to $\mu$. 
    
    If ${\bf A}$ is  the mixed frame operator of $\mu$ and $\nu$, then 
    ${\bf A} = \int_{\mathbb{R}^n \times \mathbb{R}^n} {\bf x}{\bf y}^t d\gamma({\bf x, y})$. Now define $\gamma' := ({\bf Id}, ({\bf A}^{-1})^t)_\# \gamma \in \Gamma(\mu, ({\bf A}^{-1})^t_\# \nu)$. Then
    $$\int_{\mathbb{R}^n \times \mathbb{R}^n} {\bf x}{\bf y}^t d \gamma'({\bf x, y}) =  \left(\int_{\mathbb{R}^n \times \mathbb{R}^n} {\bf x}{\bf y}^t d\gamma({\bf x, y}) \right) {\bf A}^{-1}=  {\bf A} {\bf A}^{-1} = {\bf Id}.$$
    Therefore, $({\bf A}^{-1})^t_\# \nu$ is a dual probabilistic frame to $\mu$ with respect to $\gamma'$. 
\end{proof}

We have the following characterization for pseudo-dual probabilistic frames of pushforward type.
\begin{corollary}\label{cor:psuedodualpush}
    Suppose $\mu$ is a probabilistic frame and $T_\# \mu$ is a pseudo-dual probabilistic frame to $\mu$  where $T: \mathbb{R}^n \rightarrow \mathbb{R}^n$ is measurable. Then for any ${\bf x} \in \mathbb{R}^n$, 
    \begin{equation*}
        T({\bf x}) = {\bf A}^t{\bf S}_\mu^{-1}{\bf x} + h({\bf x}),
    \end{equation*}
    where $\bf A$ is an invertible $n \times n$ matrix and $h: \mathbb{R}^n \rightarrow \mathbb{R}^n$ is such that $h_\#\mu \in \mathcal{P}_2(\mathbb{R}^n)$ and $ \int_{\mathbb{R}^n } {\bf x} h({\bf x})^t d\mu({\bf x}) = {\bf 0}_{n \times n}$. 
\end{corollary}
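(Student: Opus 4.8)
The plan is to prove this exactly as a terser analogue of \cref{PushforwardCharacter}, the single structural change being that the operator-norm condition $\|{\bf A}-{\bf Id}\|<1$ is everywhere relaxed to the requirement that ${\bf A}$ merely be invertible. As in that argument, the coupling attached to a pushforward-type dual is the deterministic (Monge) coupling $\gamma := ({\bf Id}, T)_\#\mu \in \Gamma(\mu, T_\#\mu)$, whose mixed frame operator is
\[
\int_{\mathbb{R}^n \times \mathbb{R}^n} {\bf x}{\bf y}^t\, d\gamma({\bf x, y}) = \int_{\mathbb{R}^n} {\bf x}\, T({\bf x})^t\, d\mu({\bf x}).
\]
Before either direction I would record the one algebraic identity that drives everything: since ${\bf S}_\mu = \int_{\mathbb{R}^n} {\bf x}{\bf x}^t d\mu$ is symmetric and positive definite (hence invertible) by \cref{TAcharacterization}, for any matrix ${\bf A}$ one has $\int_{\mathbb{R}^n} {\bf x}\,({\bf A}^t {\bf S}_\mu^{-1}{\bf x})^t d\mu = \int_{\mathbb{R}^n} {\bf x}{\bf x}^t {\bf S}_\mu^{-1}{\bf A}\, d\mu = {\bf S}_\mu {\bf S}_\mu^{-1}{\bf A} = {\bf A}$, using ${({\bf S}_\mu^{-1})}^t = {\bf S}_\mu^{-1}$.

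The asserted implication is the forward direction, so I would prove it first. Suppose $T_\#\mu$ is a pseudo-dual witnessed by the deterministic coupling, so that ${\bf A} := \int_{\mathbb{R}^n} {\bf x}\,T({\bf x})^t d\mu$ is invertible. Set $h({\bf x}) := T({\bf x}) - {\bf A}^t {\bf S}_\mu^{-1}{\bf x}$, so $T$ has the claimed form by definition. Then $h_\#\mu \in \mathcal{P}_2(\mathbb{R}^n)$ because $\int_{\mathbb{R}^n} \|h({\bf x})\|^2 d\mu \leq 2\int_{\mathbb{R}^n} \|T({\bf x})\|^2 d\mu + 2\int_{\mathbb{R}^n} \|{\bf A}^t {\bf S}_\mu^{-1}{\bf x}\|^2 d\mu < \infty$, the first term being finite since $T_\#\mu \in \mathcal{P}_2(\mathbb{R}^n)$ by the definition of pseudo-dual and the second since $\mu \in \mathcal{P}_2(\mathbb{R}^n)$ and the map is linear. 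The recorded identity then gives $\int_{\mathbb{R}^n} {\bf x}\,h({\bf x})^t d\mu = {\bf A} - {\bf A} = {\bf 0}_{n \times n}$, as required.

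For completeness I would also note the converse (matching the ``precisely'' flavor of \cref{PushforwardCharacter}): if $T({\bf x}) = {\bf A}^t {\bf S}_\mu^{-1}{\bf x} + h({\bf x})$ with ${\bf A}$ invertible, $h_\#\mu \in \mathcal{P}_2(\mathbb{R}^n)$, and $\int_{\mathbb{R}^n} {\bf x}\,h({\bf x})^t d\mu = {\bf 0}_{n \times n}$, then $T_\#\mu \in \mathcal{P}_2(\mathbb{R}^n)$ and the same identity makes the mixed frame operator of $({\bf Id}, T)_\#\mu$ equal to ${\bf A} + {\bf 0}_{n\times n} = {\bf A}$, which is invertible, so $T_\#\mu$ is a pseudo-dual. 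I expect no genuine analytic obstacle here, since all the content lives in the template of \cref{PushforwardCharacter}; the only point requiring care is conceptual rather than technical. Unlike the reconstruction-oriented \cref{def:approximate dual} and \cref{DualIsFrame}, pseudo-duality asks only for \emph{some} coupling realizing an invertible mixed frame operator, so I would state explicitly that the characterization is for the pushforward coupling $({\bf Id}, T)_\#\mu$, which is the canonical choice attached to the map $T$ and precisely the convention used in \cref{PushforwardCharacter}.
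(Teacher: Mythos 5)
Your proposal is correct and follows essentially the same route as the paper's own proof: both directions use the deterministic coupling $({\bf Id},T)_\#\mu$, define $h({\bf x}) = T({\bf x}) - {\bf A}^t{\bf S}_\mu^{-1}{\bf x}$ with ${\bf A} := \int_{\mathbb{R}^n}{\bf x}\,T({\bf x})^t\,d\mu({\bf x})$, and verify $\int_{\mathbb{R}^n}{\bf x}\,h({\bf x})^t\,d\mu({\bf x}) = {\bf 0}_{n\times n}$ via ${\bf S}_\mu{\bf S}_\mu^{-1}{\bf A} = {\bf A}$. Your explicit remarks that the second-moment bound for $h_\#\mu$ needs checking and that the characterization is relative to the pushforward coupling (since pseudo-duality only demands \emph{some} coupling with invertible mixed frame operator) are points the paper leaves implicit, and are worth keeping.
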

\begin{proof}
 If $T$ is of the above type, then $T_\# \mu \in \mathcal{P}_2(\mathbb{R}^n)$. Now define $\gamma = ({\bf Id}, T)_\#\mu$. Then 
 $$\int_{\mathbb{R}^n \times \mathbb{R}^n} {\bf x} {\bf y}^t d\gamma({\bf x, y}) = \int_{\mathbb{R}^n } {\bf x} {\bf x}^t d\mu({\bf x}) \ {\bf S}_\mu^{-1} {\bf A} + {\bf 0}_{n \times n} = {\bf A}, $$ 
 which is invertible. Therefore,  $T_\# \mu$ is a pseudo-dual frame. Conversely, if $T_\# \mu$ is a pseudo-dual, then the mixed frame operator ${\bf A}:= \int_{\mathbb{R}^n } {\bf x} T({\bf x})^t d\mu({\bf x}) $ is invertible. Clearly, for any ${\bf x} \in \mathbb{R}^n$, 
    $T({\bf x}) = {\bf A}^t{\bf S}_\mu^{-1}({\bf x}) + T({\bf x}) - {\bf A}^t{\bf S}_\mu^{-1}({\bf x}).$
    Now let $h({\bf x}) = T({\bf x}) - {\bf A}^t{\bf S}_\mu^{-1}({\bf x})$, then $h_\#\mu \in \mathcal{P}_2(\mathbb{R}^n)$ and 
    $$\int_{\mathbb{R}^n } {\bf x} h({\bf x})^t d\mu({\bf x}) =  {\bf A} - \int_{\mathbb{R}^n } {\bf x} {\bf x}^t d\mu({\bf x}) \ {\bf S}_\mu^{-1} {\bf A} = {\bf 0}_{n \times n}.$$
    Thus $T$ is of the desired type.
\end{proof}

\section*{Acknowledgments}
Dongwei used DeepSeek as an internet search engine and thinking tool for the isometric isomorphism proof of \cref{lem:equivalent implies same redundancy}. Dongwei would also like to thank Kasso A. Okoudjou for his hospitality during a visit to Tufts University and for helpful discussions about the redundancy of probabilistic frames and the differences between probabilistic and continuous frames.
This work was partially supported by the National Science Foundation (DMS–2107700; Clayton Shonkwiler).

\bibliographystyle{plain} 
\bibliography{refs}
\end{document}